\spnewtheorem{fact}{Fact}[section]{\bf}{\it}
\DeclareMathOperator{\prox}{prox}
\DeclareMathOperator{\vspan}{span}
\DeclareMathOperator*{\argmin}{arg\,min}
\DeclareMathOperator*{\dom}{dom}
\DeclareMathOperator*{\range}{range}
\DeclareMathOperator*{\cone}{cone}
\DeclareMathOperator*{\gph}{gph}
\DeclareMathOperator*{\cl}{cl}
\DeclareMathOperator*{\intr}{int}
\DeclareMathOperator{\prob}{prob}
\newcommand{\setto}{\rightrightarrows} 
\newcommand{\map}[3]{#1:\,#2\rightarrow #3\,}
\newcommand{\paren}[1]{\left(#1\right)}
\newcommand{\ecklam}[1]{\left[#1\right]}
\newcommand{\klam}[1]{\left\{#1\right\}}
\newcommand{\set}[2]{\left\{#1\,\left|\,#2\right.\right\}}
\colorlet{mapleCode}{red}
\colorlet{mapleMath}{blue}
\newcommand{\envelope}{(\raisebox{-.5pt}{\scalebox{1.45}{\Letter}}\kern-1.7pt{\hspace{0.5ex}})}
\begin{document}

\author{Florian~Lauster \and D.~Russell~Luke \and Matthew~K.~Tam}
\authorrunning{F. Lauster \and D. R. Luke \and M. K. Tam}

\institute{F. Lauster \and D. R. Luke \and M. K. Tam \envelope \at
        Institut f\"ur Num. und Angew. Mathematik,
		Universit\"at G\"ottingen, 37083 G\"ottingen, Germany.\\
		\email{\href{mailto:m.tam@math.uni-goettingen.de}{m.tam@math.uni-goettingen.de}} \\[0.5em]
		F. Lauster\\
		\email{\href{mailto:florian.lauster@stud.uni-goettingen.de}{florian.lauster@stud.uni-goettingen.de}}	
		\\[0.5em]
		D.R. Luke\\
		\email{\href{mailto:r.luke@math.uni-goettingen.de}{r.luke@math.uni-goettingen.de}}
		}
		
\title{Symbolic Computation with Monotone Operators}

\date{\today\\[1.25em]
      {\normalsize\emph{Dedicated to the memory of Jonathan Michael Borwein}}}

\def\subclassname{{\bfseries Mathematics Subject Classification (2010)}\enspace}
\def\makeheadbox{\relax}

\maketitle

\begin{abstract}
	We consider a class of monotone operators which are appropriate for symbolic representation 
and manipulation within a computer algebra system. Various structural properties of the class 
({\em e.g.,} closure under taking inverses, resolvents) are investigated as well as the role 
played by maximal monotonicity within the class. In particular, we show that there is a 
natural correspondence between our class of monotone operators and the subdifferentials 
of convex functions belonging to a class of convex functions deemed suitable for symbolic 
computation of Fenchel conjugates which were previously studied by Bauschke \& von 
Mohrenschildt and by Borwein \& Hamilton. A number of illustrative examples utilizing 
the introduced class of operators are provided including computation of proximity operators,  
recovery of a convex penalty function associated with the hard thresholding operator, 
and computation of superexpectations, superdistributions and superquantiles with specialization to risk measures. 
\end{abstract}

\keywords{monotone operator, symbolic computation, experimental mathematics}

\subclass{47H05, 
          47N10, 
          68W30} 

\section{Introduction}
 The \emph{Fenchel conjugate} and the \emph{subdifferential} of a function are two objects of 
fundamental importance in convex analysis. For this reason, software libraries or packages which 
have the ability to compute and manipulate such objects easily are a valuable edition to the convex 
analyst's toolkit.  In the spirit of \emph{experimental mathematics} \cite{BaileyBorweinGirgensohn03,BaileyBorwein03,BaileyBorwein05,BBCGLM}, such software also 
enables researchers to use the machinery of convex analysis to test ideas and look for 
patterns. There are also potential pedagogical uses if one believes, as we do, that nonsmooth analysis could and 
should be a part of the traditional ``calculus'' cannon taught to high school and beginning Bachelor's level students.

 There are at least two possible paradigms for computation which can be followed for the development of such a 
software library, namely, computations can be done \emph{numerically} or \emph{symbolically}. Roughly 
speaking, the former involves numerical evaluation of the object under consideration on a grid of points in the 
ambient space whilst the latter involves the  manipulation objects through \emph{symbolic expressions} 
with a \emph{Computer Algebra System (CAS)}. We focus on the second approach, the ``symbolic paradigm". 
For further details regarding numerical convex analysis,  we refer the reader to 
\cite{lucet1997faster,lucet2010shape,GarLuc14}.

It is not too difficult to imagine that there are convex functions which, if not impossible, are too complex to represent and manipulate symbolically.  Nevertheless, by restricting oneself to a suitable class of convex functions, a great deal can still be accomplished. Such a framework for symbolic convex analysis was proposed by Bauschke \& von Mohrenschildt \cite{bauschke1997fenchel,bauschke2006symbolic} for functions on the real line and an extension which could handle a many-dimensional setting was later proposed by Borwein \& Hamilton \cite{hamilton2005symbolic,borwein2009symbolic}. By ``suitable class" we mean a class of functions which are representable 
by an appropriate data-structure, are closed under operations such as Fenchel conjugation and are sufficiently generic so as to capture many important examples.

 As the subdifferential of a convex function is a monotone operator, it is natural to ask what class of monotone 
operators is suitable for symbolic computation as well as their relationship to the ``suitable class" of convex functions 
studied in \cite{bauschke1997fenchel,bauschke2006symbolic,hamilton2005symbolic,borwein2009symbolic}. 
To the best of our knowledge, little has been done in this direction with the aforementioned works not focusing on the structure of the underlying monotone operators directly.
In this work, we propose and study such a 
class of monotone operators which are suitable for implementation within a CAS. Among our main results, we 
prove that there is a natural correspondence between our class of monotone operators and the subdifferentials 
of the functions studied by Bauschke \& von Mohrenschildt (Theorem~\ref{th:F and T}).  
We show that a consequence of this is that the class is closed under addition, scalar multiplication, taking 
inverses and taking resolvents (Proposition~\ref{prop:T operator properties}).  We demonstrate the application of 
this class of operators on several illustrative examples in Section~\ref{s:examples}.

\section{Preliminaries}
Our notation and terminology are standard and can be found, for instance, in \cite{borweinvanderwerff2010convex} 
and \cite{VA}.  Since this work concerns computer implementations, we restrict our attention to 
$\mathbb{R}^n$ equipped with standard dot-product, denoted $\langle\cdot,\cdot\rangle$. 
For quick reference, we list some well-known facts from convex analysis that will be important later. 
 
The  \emph{(effective) domain} of a function ${f\colon\mathbb{R}^n\to[-\infty,+\infty]}$  is the set 
  $\dom f:=\{x\in\mathbb{R}^n: |f(x)|<+\infty\}.$  We will be interested in {\em proper (not everywhere infinite and 
nowhere equal to $-\infty$), lower semi-continuous (lsc), convex functions}. The \emph{sub-differential} of a convex function is the set-valued mapping 
$\partial f:\mathbb{R}^n\setto\mathbb{R}^n$ given by
   $$\partial f(\overline{x}) := \begin{cases}
                         \{\phi\in\mathbb{R}^n:\langle\phi,x-\overline{x}\rangle \leq f(x)-f(\overline{x})\} &\overline{x}\in\dom f,\\
                         \emptyset & \overline{x}\not\in\dom f. \\
                         \end{cases}$$
  The \emph{Fenchel conjugate} of $f$ is the function $f^*:\mathbb{R}^n\to[-\infty,+\infty]$ defined by
   $$ f^*(y) := \sup_{x\in\mathbb{R}^n}\{\langle y,x\rangle-f(x)\}. $$
 The subdifferentials of a function and its Fenchel conjugate are inversely related. 
 \begin{fact}[{\cite[Prop.~4.4.5]{borweinvanderwerff2010convex}}]\label{fact1}
 	Let $f:\mathbb{R}^n\to(-\infty,+\infty]$ be a function with $\overline{x}\in\dom f$. 
 	If $\overline{v}\in\partial f(\overline{x})$ then $\overline{x}\in\partial f^*(\overline{v})$. 
 	Conversely, if $f$ is a convex function which is lsc at $\bar{x}$ and
$\overline{v}\in\partial f^*(\overline{x})$, then $\overline{v}\in\partial f(\overline{x})$.
 \end{fact}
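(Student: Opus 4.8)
To prove Fact~\ref{fact1}, the plan is to extract everything from the Fenchel--Young inequality
\[ f^*(v)\ \ge\ \langle v,x\rangle - f(x)\qquad\text{for all } x,v\in\mathbb{R}^n, \]
which holds merely by definition of the conjugate, together with the observation that $\overline v\in\partial f(\overline x)$ is precisely the statement that Fenchel--Young holds with equality at $(\overline x,\overline v)$. The first assertion needs no convexity or semicontinuity; the converse will be obtained by applying the first assertion to $f^*$ in place of $f$, so that $f^{**}$ appears, and then returning to $f$ via the biconjugate theorem.

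First I would treat the forward implication. Assuming $\overline v\in\partial f(\overline x)$, the subgradient inequality rearranges to $\langle \overline v,x\rangle - f(x)\le \langle \overline v,\overline x\rangle - f(\overline x)$ for every $x$; taking the supremum over $x$ gives $f^*(\overline v)\le \langle \overline v,\overline x\rangle - f(\overline x)$, which is finite because $\overline x\in\dom f$, and together with Fenchel--Young yields $f^*(\overline v)=\langle \overline v,\overline x\rangle - f(\overline x)$ with $\overline v\in\dom f^*$. Now for arbitrary $y$, Fenchel--Young gives $f^*(y)\ge\langle y,\overline x\rangle - f(\overline x)$; substituting $f(\overline x)=\langle \overline v,\overline x\rangle - f^*(\overline v)$ and rearranging produces $\langle \overline x,\,y-\overline v\rangle\le f^*(y)-f^*(\overline v)$, i.e.\ $\overline x\in\partial f^*(\overline v)$.

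For the converse I would first dispose of the degenerate cases: if the convex function $f$ took the value $-\infty$ (or were identically $+\infty$) then $f^*\equiv+\infty$ and $\partial f^*$ would be empty-valued, so the hypothesis is vacuous; hence we may assume $f$ is proper. Applying the forward implication to $f^*$ then yields $\overline v\in\partial f^{**}(\overline x)$. The remaining step is to pass from $f^{**}$ back to $f$: since $f$ is proper convex, $f^{**}$ coincides with the lower semicontinuous convex hull $\cl f$ of $f$, and $\cl f$ agrees with $f$ at every point at which $f$ is already lower semicontinuous, so $f^{**}(\overline x)=f(\overline x)$. Because $f^{**}\le f$ pointwise with equality at $\overline x$, every subgradient of $f^{**}$ at $\overline x$ also satisfies the subgradient inequality for $f$ at $\overline x$, giving $\overline v\in\partial f(\overline x)$.

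The main obstacle is the last step of the converse. The hypothesis supplies lower semicontinuity of $f$ only at the single point $\overline x$, so one cannot simply invoke ``$f^{**}=f$ for proper lsc convex $f$'': what is needed is the sharper, local form of the Fenchel--Moreau theorem --- that the closure of a proper convex function agrees with the function at each point of lower semicontinuity --- together with the identification $f^{**}=\cl f$, whose only nontrivial ingredient is that a proper convex function on $\mathbb{R}^n$ admits a continuous affine minorant. Everything else is routine manipulation of the Fenchel--Young inequality.
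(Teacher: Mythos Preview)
Your argument is correct and follows the standard route: characterise $\overline v\in\partial f(\overline x)$ as equality in Fenchel--Young, deduce the forward implication directly, and obtain the converse by applying the forward implication to $f^*$ and then descending from $f^{**}$ to $f$ via $f^{**}=\cl f$ together with $\cl f(\overline x)=f(\overline x)$ at a point of lower semicontinuity. The paper does not give its own proof of this statement --- it is recorded as a Fact with a citation to \cite[Prop.~4.4.5]{borweinvanderwerff2010convex} --- so there is no in-paper argument to compare against.

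Two minor remarks. First, a small slip in your degenerate-case discussion: if $f\equiv+\infty$ then $f^*\equiv-\infty$, not $+\infty$; but this case is already excluded by the standing hypothesis $\overline x\in\dom f$, so nothing downstream is affected. Second, you have (correctly) interpreted the converse hypothesis as $\overline x\in\partial f^*(\overline v)$, which is what one needs in order to apply the forward implication to $f^*$ and land at $\overline v\in\partial f^{**}(\overline x)$; the paper's printed hypothesis $\overline v\in\partial f^*(\overline x)$ appears to be a typographical transposition.
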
  

 Let $T:\mathbb{R}^n\setto\mathbb{R}^n$ be a set-valued map. The \emph{domain} of $T$ 
is the set $\dom T:=\{x\in\mathbb{R}^n:Tx\neq\emptyset\}$, and 
the \emph{graph} of $T$ is the set $\gph T:=\{(x,y)\in\mathbb{R}^n\times\mathbb{R}^n:y\in Tx\}$. 
Recall that $T$ is \emph{monotone} if 
   \begin{equation}\label{eq:monotone defn}
	   \begin{rcases*}
	     (x,x^+)\in\gph T \\
	     (y,y^+)\in\gph T \\
	   \end{rcases*} \implies
	   \langle x-y,x^+-y^+\rangle\geq 0.
   \end{equation}
 If the inequality in \eqref{eq:monotone defn} is strict whenever $x\neq y$, then $T$ is said to be \emph{strictly monotone}. If $T$ is monotone and there exists no monotone operator whose graph properly contains the graph of $T$, then $T$ is said to be \emph{maximal monotone}.
 
 Next we recall a notion stronger than monotonicity. An operator $T:\mathbb{R}^n\setto\mathbb{R}^n$ 
is said to be \emph{cyclically monotone} if, for every $n\geq 2$, it holds that
   \begin{equation*}
   \begin{rcases*}
   (x_1,x^+_1)\in\gph T \\
   \hphantom{(x_1,x^+_1)} \vdots \\
   (x_n,x_n^+)\in\gph T \\
   \hphantom{(x_n} x_{n+1} = x_1 \\
   \end{rcases*} \implies
   \sum_{i=1}^n\langle x_i-x_{i+1},x^+_i\rangle\geq 0.
   \end{equation*}
 Analogously, if $T$ is cyclically  monotone and there exists no cyclically monotone operator whose graph 
properly contains the graph of $T$, then $T$ is said to be \emph{maximal cyclically monotone}.
 
 The family of maximal cyclically monotone operators can be characterized as  
 the subdifferentials of proper, lsc, convex functions.
 \begin{fact}[Rockafellar \cite{Rockafellar70}]\label{fact:cyclically monotonicity}
 	Let $T:\mathbb{R}^n\setto\mathbb{R}^n$. Then $T$ is maximal cyclically monotone if and only 
if there exists a proper, lsc, convex function $f:\mathbb{R}^n\to(-\infty,+\infty]$ such that $T=\partial f$. 
 \end{fact}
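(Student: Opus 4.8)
\medskip
\noindent\textit{Proof plan.}
The plan is to prove the two implications separately: the ``only if'' direction by an explicit ``primitive''\nobreakdash-type construction (due to Rockafellar), and the ``if'' direction as a consequence of it together with a rigidity observation. The easy half of ``if'' comes first: if $T=\partial f$ for proper, lsc, convex $f$, then for any cycle $(x_1,x_1^+),\dots,(x_n,x_n^+)\in\gph T$ with $x_{n+1}=x_1$ (so each $x_i\in\dom f$), summing the subgradient inequalities $\langle x_i^+,x_{i+1}-x_i\rangle\le f(x_{i+1})-f(x_i)$ over $i$ makes the right-hand side telescope to $0$, which is exactly cyclic monotonicity of $T$. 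That $\partial f$ is moreover \emph{maximal} cyclically monotone is postponed to the last step.

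For the construction, suppose $T$ is cyclically monotone with $\gph T\neq\emptyset$, fix $(x_0,x_0^+)\in\gph T$, and set
\[
 f(x):=\sup\Big\{\,\langle x-x_m,x_m^+\rangle+\textstyle\sum_{i=1}^m\langle x_i-x_{i-1},x_{i-1}^+\rangle\ :\ m\ge 0,\ (x_1,x_1^+),\dots,(x_m,x_m^+)\in\gph T\,\Big\},
\]
the supremum over all finite chains in $\gph T$ (with the convention $x_0=x_0$, $x_0^+=x_0^+$ fixed). Being a pointwise supremum of affine functions, $f$ is lsc and convex, and $f(x)\ge\langle x-x_0,x_0^+\rangle>-\infty$ (take $m=0$). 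Cyclic monotonicity forces $f(x_0)=0$: the $m=0$ chain gives $f(x_0)\ge 0$, while the value of an arbitrary chain at $x_0$ equals $\sum_{i=0}^m\langle x_{i+1}-x_i,x_i^+\rangle$ with $x_{m+1}:=x_0$, which is $\le 0$ by cyclic monotonicity of the cycle $x_0\to x_1\to\dots\to x_m\to x_0$; hence $f$ is proper. Finally $T\subseteq\partial f$: given $(\bar x,\bar x^+)\in\gph T$, closing the cycle through $(\bar x,\bar x^+)$ shows the value of every chain at $\bar x$ is $\le\langle\bar x-x_0,\bar x^+\rangle$, so $f(\bar x)<+\infty$ and $\bar x\in\dom f$; then for $x\in\mathbb{R}^n$ and $\varepsilon>0$ pick a chain whose value at $\bar x$ exceeds $f(\bar x)-\varepsilon$ and append $(\bar x,\bar x^+)$ to it, so that evaluating the lengthened chain at $x$ yields $f(x)\ge f(\bar x)+\langle\bar x^+,x-\bar x\rangle-\varepsilon$, and letting $\varepsilon\downarrow 0$ gives $\bar x^+\in\partial f(\bar x)$. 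This settles ``only if'': a maximal cyclically monotone $T$ is nonempty, the construction produces a proper lsc convex $f$ with $\gph T\subseteq\gph\partial f$, and since $\partial f$ is cyclically monotone, maximality of $T$ forces $T=\partial f$.

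It remains to show $\partial f$ is \emph{maximal} cyclically monotone. Suppose $S\supseteq\partial f$ is cyclically monotone; by Zorn's lemma we may take $S$ maximal cyclically monotone, so $S=\partial g$ for some proper lsc convex $g$ by the direction just proved, and it suffices to show $\partial g=\partial f$. This reduces to the rigidity lemma: \emph{if $\gph\partial f\subseteq\gph\partial g$ for proper lsc convex $f,g$, then $g-f$ is constant.} To prove it, fix $(x_0,x_0^+)\in\gph\partial f$; for $x\in\ri\dom f$, chain $x_0$ to $x$ by subgradients of $f$ sampled along the segment $[x_0,x]$ --- whose points other than $x_0$ lie in $\ri\dom f$, where $\partial f\neq\emptyset$ --- choosing at each node the subgradient maximal in the direction $x-x_0$. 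Telescoping this chain against the subgradient inequality for $g$ (legitimate because those subgradients also lie in $\partial g$) bounds its value by $g(x)-g(x_0)$, while refining the partition and pushing the first node to $x_0$ lets the chain value converge to $\int_0^1 f'(x_0+t(x-x_0);\,x-x_0)\,dt=f(x)-f(x_0)$ by the one-dimensional fundamental theorem of calculus for the convex function $t\mapsto f(x_0+t(x-x_0))$. Thus $f-f(x_0)\le g-g(x_0)$ on $\ri\dom f$; using at each $z\in\ri\dom f$ a subgradient $w\in\partial f(z)\subseteq\partial g(z)$ and the two affine minorants of $f$ and $g$ it produces shows $g-f$ is bounded above on $\dom f$ by its constant value on $\ri\dom f$, so $g-f$ equals some constant $c$ on $\dom f$; since then $g\ge f+c$ on $\cl\dom f$ by lower semicontinuity and $f\equiv+\infty$ off $\cl\dom f$, we conclude $g=f+c$ everywhere, hence $\partial g=\partial f$. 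Therefore $\partial f\subseteq S=\partial g=\partial f$, forcing $S=\partial f$.

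I expect the construction to be routine apart from careful bookkeeping with $\pm\infty$, and the real difficulty to lie in the rigidity lemma --- in particular its behaviour on the relative boundary of $\dom f$, where a closed convex function need not be continuous along arbitrary approaches but only along segments issuing from the relative interior, which is precisely why the argument above is organized around line segments and lsc closure rather than pointwise limits. An alternative to the rigidity lemma, if one is willing to invoke it, is the classical fact that $\partial f$ is maximal \emph{monotone}: any cyclically monotone $S\supseteq\partial f$ is in particular monotone, hence already equals $\partial f$.
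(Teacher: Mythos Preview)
The paper does not prove this statement: it is recorded as a \emph{Fact} with a citation to Rockafellar's 1970 paper and is used as a black box elsewhere in the article. So there is no ``paper's own proof'' to compare against. What you have written is essentially Rockafellar's original argument --- the explicit antiderivative construction for the ``only if'' direction, followed by a rigidity step for maximality --- and the construction part is carried out correctly.

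The rigidity lemma, however, has two soft spots in your sketch. First, from the chain argument you obtain only the one-sided inequality $f(x)-f(x_0)\le g(x)-g(x_0)$ on $\ri\dom f$, yet in the next sentence you refer to ``its constant value on $\ri\dom f$''; constancy requires the reverse inequality as well, which you get by running the same argument with the roles of $x$ and $x_0$ swapped (both lying in $\ri\dom f$, hence both in $\dom\partial f$). Second, the sentence ``the two affine minorants \ldots\ show $g-f$ is bounded \emph{above} on $\dom f$'' has the direction reversed: the affine minorants $f(z)+\langle w,\cdot-z\rangle$ and $g(z)+\langle w,\cdot-z\rangle$ with $w\in\partial f(z)\subseteq\partial g(z)$ give the \emph{lower} bound $g\ge f+c$ everywhere (taking the supremum over $(z,w)$ and using $f=f^{**}$), while the \emph{upper} bound $g\le f+c$ on $\dom f$ comes from lower semicontinuity of $g$ together with segment-continuity of $f$ from $\ri\dom f$ to the relative boundary. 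With these two corrections the argument closes. As you note yourself, the cleaner route is simply to invoke the classical fact that $\partial f$ is maximal \emph{monotone}; then any cyclically monotone $S\supseteq\partial f$ is in particular monotone and hence equals $\partial f$, which bypasses the rigidity lemma entirely.
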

 Regarding Fact~\ref{fact:cyclically monotonicity}, we remark that cyclic monotonicity is stronger than mere monotonicity. However, on the real-line ({\em i.e.,} $\dom T=\mathbb{R}$), the two notions coincide \cite[Th.~22.18]{bauschke2011convex}.
 
 In general, the sum of two maximal monotone operators need not be maximal unless an appropriate 
constraint qualification is satisfied. In the following Fact, we give one such example.
 \begin{fact}[Maximal monotonicity of sums {\cite[Th.~24.3]{bauschke2011convex}}]\label{fact:maximal monotone sum}
 	Let $T_1,T_2:\mathbb{R}^n\setto\mathbb{R}^n$ be maximal monotone such that 
 	 \begin{equation}\label{eq:maximal monotone sum cq}
 	  \cone(\dom T_1-\dom T_2)=\cl\vspan(\dom T_1-\dom T_2).
 	 \end{equation}
 	 Then $T_1+T_2$ is maximal monotone. In particular, \eqref{eq:maximal monotone sum cq} 
	  holds whenever $0\in\intr(\dom T_1-\dom T_2)$.
 \end{fact}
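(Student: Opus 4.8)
This is the finite-dimensional instance of Rockafellar's sum theorem, and the plan is to follow the classical route: reduce maximal monotonicity to a surjectivity statement via Minty's theorem, establish surjectivity by Yosida regularization, and use the constraint qualification only to obtain a priori bounds on the regularized solutions. That $T_1+T_2$ is monotone follows from \eqref{eq:monotone defn}, since adding the defining inequality for $T_1$ to that for $T_2$, at the same two points, gives the inequality for $T_1+T_2$. For maximality, recall Minty's theorem: a monotone operator $T$ on $\mathbb{R}^n$ is maximal monotone if and only if $\range(I+T)=\mathbb{R}^n$ (equivalently $\range(\lambda I+T)=\mathbb{R}^n$ for every $\lambda>0$), in which case $(I+T)^{-1}$ is single-valued, firmly nonexpansive and everywhere defined. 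So it suffices to solve $w\in x+T_1x+T_2x$ for each $w\in\mathbb{R}^n$. Replacing $T_i$ by $x\mapsto T_ix+\tfrac12 x-\tfrac12 w$ --- still maximal monotone, now $\tfrac12$-strongly monotone, with unchanged domain and hence unchanged difference set $\dom T_1-\dom T_2$ --- reduces matters to the following: if $T_1,T_2$ are strongly monotone and the constraint qualification holds, then $0\in T_1x+T_2x$ is solvable.

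For fixed $\lambda>0$, let $T_2^\lambda:=\tfrac1\lambda\bigl(I-(I+\lambda T_2)^{-1}\bigr)$ be the Yosida approximation of $T_2$; it is everywhere defined, $\tfrac1\lambda$-Lipschitz and $\lambda$-cocoercive. The inclusion $0\in T_1x+T_2^\lambda x$ is equivalent to the fixed-point equation $x=(I+\lambda T_1)^{-1}\bigl(x-\lambda T_2^\lambda x\bigr)$, in which $(I+\lambda T_1)^{-1}$ is a strict contraction (by strong monotonicity of $T_1$) and $I-\lambda T_2^\lambda$ is nonexpansive (by $\lambda$-cocoercivity of $T_2^\lambda$); the composition is a contraction, so Banach's fixed-point theorem yields a unique solution $x_\lambda$. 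Writing $u_\lambda:=T_2^\lambda x_\lambda$ and $y_\lambda:=x_\lambda-\lambda u_\lambda$, one has $(x_\lambda,-u_\lambda)\in\gph T_1$, $(y_\lambda,u_\lambda)\in\gph T_2$, and $x_\lambda-y_\lambda=\lambda u_\lambda$.

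The decisive step --- and the one I expect to require the most care --- is to show that $\{x_\lambda\}$, and with it $\{u_\lambda\}$ and $\{y_\lambda\}$, stays bounded as $\lambda\downarrow 0$; this is the only place the constraint qualification enters. Testing strong monotonicity of $T_1$ against a pair $(a,a^*)\in\gph T_1$ and of $T_2$ against $(b,b^*)\in\gph T_2$, adding, and using $x_\lambda-y_\lambda=\lambda u_\lambda$, one bounds $\tfrac12\|x_\lambda\|^2+\tfrac12\|y_\lambda\|^2$ from above by $\langle u_\lambda,a-b\rangle-\lambda\|u_\lambda\|^2$ plus terms linear in $x_\lambda,y_\lambda$ whose coefficients depend only on the chosen pairs. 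Because $a-b$ sweeps out $\dom T_1-\dom T_2$, which contains a ball about the origin when $0\in\intr(\dom T_1-\dom T_2)$, one can choose the reference pairs --- exploiting also the local boundedness of maximal monotone operators on the interiors of their domains to keep $\|a^*\|,\|b^*\|$ controlled --- so as to force a uniform bound on $\|u_\lambda\|$; the quadratic terms then bound $\|x_\lambda\|$ and $\|y_\lambda\|$. With the bounds in hand, pass to a subsequence along $\lambda_k\downarrow 0$: then $x_{\lambda_k}\to\bar x$, $y_{\lambda_k}\to\bar x$ (since $\|x_{\lambda_k}-y_{\lambda_k}\|=\lambda_k\|u_{\lambda_k}\|\to 0$) and $u_{\lambda_k}\to\bar u$, and closedness of $\gph T_1$ and $\gph T_2$ gives $-\bar u\in T_1\bar x$ and $\bar u\in T_2\bar x$, i.e.\ $0\in T_1\bar x+T_2\bar x$, as wanted.

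Finally, the stated qualification $\cone(\dom T_1-\dom T_2)=\cl\vspan(\dom T_1-\dom T_2)$ is exactly the assertion that the origin lies in the relative interior of the convex set $\dom T_1-\dom T_2$; restricting the argument above to the affine hull of that set reduces the general case to the interior case already treated, and the implication from $0\in\intr(\dom T_1-\dom T_2)$ is then immediate. Two alternative organisations are worth recording. Rockafellar's original proof first settles the special case $T_2=N_C$, the normal-cone operator of a nonempty closed convex set (where the boundedness estimate is geometrically transparent), and recovers the general case by applying that special case in $\mathbb{R}^n\times\mathbb{R}^n$ to the operator $(x,y)\mapsto(T_1x)\times(T_2y)$ together with the normal cone of the diagonal. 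A second route replaces the Yosida regularization by a partial infimal convolution, in the dual variable, of the Fitzpatrick functions of $T_1$ and $T_2$, the constraint qualification re-entering as precisely the hypothesis under which that infimal convolution is exact and lower semicontinuous.
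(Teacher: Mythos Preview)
The paper does not prove this statement at all: it is recorded as a \emph{Fact} with a citation to \cite[Th.~24.3]{bauschke2011convex} and is used as a black box in the proof of Proposition~\ref{prop:T operator properties}\eqref{prop:T operator properties b}. So there is no ``paper's own proof'' to compare against.

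Your sketch is the classical route to Rockafellar's sum theorem in finite dimensions (Minty surjectivity, reduction to the strongly monotone case, Yosida regularization, a priori bounds from the constraint qualification, compactness and graph-closedness to pass to the limit), and as an outline it is sound. The only place I would flag is the boundedness step: the inequality you obtain after adding the two strong-monotonicity estimates gives control of $\|x_\lambda\|,\|y_\lambda\|$ \emph{in terms of} $\|u_\lambda\|$, and the $\lambda\|u_\lambda\|^2$ term degenerates as $\lambda\downarrow 0$, so one really must vary the reference pair $(a,b)$ over a set with $a-b$ filling a neighbourhood of the origin \emph{while keeping $a^*,b^*$ uniformly bounded}. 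You gesture at this with local boundedness on interiors of domains, which is correct in $\mathbb{R}^n$, but the actual extraction of a $\lambda$-free bound on $\|u_\lambda\|$ from the family of inequalities indexed by $(a,b)$ deserves one more sentence. With that caveat, the argument goes through; the two alternative organisations you mention (normal-cone-plus-diagonal, and Fitzpatrick functions) are also standard and correct.
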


Let $S$ be a subset of $\mathbb{R}^n$. Recall that $S$ is said to be an \emph{$m$-dimensional simplex} 
if there exist a set of $m+1$ affinely independent points whose convex hull equals $S$. We say that $S$ 
is \emph{locally simplicial}, in the sense of 
\cite{rockafellar1970convex}, if for each $x\in S$ there exists a finite collection of simplicies 
$S_1,\dots,S_m$ contained in $S$ such that, for some neighborhood $U$ of $x$, 
  $$U\cap \left(S_1\cup\dots\cup S_m\right) = U\cap S.$$
Examples of locally simplicial sets are line segments, polyhedral convex sets, and relatively open convex sets.
\begin{fact}[Continuity on the effective domain 
{\cite[Th.~10.2]{rockafellar1970convex}}]\label{fact:cont local simplicial}
	Let $f$ be a convex function on $\mathbb{R}^n$, and let $S$ be any locally simplicial subset of $\dom f$. 
Then $f$ is upper semi-continuous relative to $S$. In particular, if $f$ is lsc, then $f$ is continuous relative to $S$.
\end{fact}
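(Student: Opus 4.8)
The plan is to reduce, using the definition of local simpliciality, to the case in which $S$ is a single simplex, and then to prove upper semicontinuity on a simplex by means of barycentric coordinates together with the classical fact that a convex function is continuous, relative to the affine hull of its effective domain, at every relative-interior point of that domain. The ``in particular'' clause follows at once, since a function that is both upper and lower semicontinuous relative to $S$ is continuous relative to $S$.

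\emph{Reduction to a simplex.} Upper semicontinuity relative to $S$ is a local, pointwise condition: $\limsup_{x\to\bar x,\,x\in S}f(x)\le f(\bar x)$ for every $\bar x\in S$. Fix $\bar x$; local simpliciality furnishes simplices $S_1,\dots,S_m\subseteq S$ and a neighbourhood $U$ of $\bar x$ with $U\cap(S_1\cup\cdots\cup S_m)=U\cap S$. If $x_\ell\to\bar x$ with $x_\ell\in S$, then $x_\ell\in\bigcup_i S_i$ for large $\ell$, so along a subsequence the $x_\ell$ all lie in a single $S_i$; as simplices are closed, $\bar x\in S_i$. Since $S_i\subseteq S\subseteq\dom f$, it therefore suffices to prove the statement when $S$ is itself a simplex — a routine subsequence argument then carrying the estimate back to $S$.

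\emph{The simplicial case.} Write $S=\conv\{v_0,\dots,v_k\}$ with the $v_i$ affinely independent, so each $x\in S$ has unique barycentric coordinates $x=\sum_i\lambda_i(x)v_i$, $\lambda_i(x)\ge 0$, $\sum_i\lambda_i(x)=1$, and each $\lambda_i$ is the restriction of an affine function on $\operatorname{aff}S$, hence continuous. Reindexing so that $\bar x$ lies in the relative interior of the face $F:=\conv\{v_0,\dots,v_j\}$ (equivalently, $\lambda_i(\bar x)>0$ exactly for $i\le j$), I would, for $x\in S$ near $\bar x$, put $t(x):=\sum_{i>j}\lambda_i(x)$, so that $t(x)\to 0$ as $x\to\bar x$ and $t(x)\in[0,1)$; when $t(x)>0$, decompose
\[
x=(1-t)\,y+t\,z,\qquad y=\sum_{i\le j}\frac{\lambda_i(x)}{1-t}\,v_i\in F,\qquad z=\sum_{i>j}\frac{\lambda_i(x)}{t}\,v_i\in\conv\{v_{j+1},\dots,v_k\}.
\]
Convexity gives $f(x)\le(1-t)f(y)+t\,f(z)\le(1-t)f(y)+t\,M$, where $M:=\max_{i>j}f(v_i)<+\infty$ because $v_{j+1},\dots,v_k\in S\subseteq\dom f$. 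As $x\to\bar x$ one has $t\to 0$ and $y\to\bar x$ (the normalised coordinates $\lambda_i(x)/(1-t)$ converge to $\lambda_i(\bar x)$ for $i\le j$), with $y\in F$. Since $F\subseteq\dom f$ is full-dimensional in $\operatorname{aff}F$, the effective domain of $f|_{\operatorname{aff}F}$ is also full-dimensional in $\operatorname{aff}F$ and contains $\bar x$ in its relative interior, so $f|_{\operatorname{aff}F}$ is continuous at $\bar x$ and $f(y)\to f(\bar x)$. Passing to the limit,
\[
\limsup_{x\to\bar x,\,x\in S}f(x)\ \le\ \limsup_{x\to\bar x}\big[(1-t(x))\,f(y)+t(x)\,M\big]\ =\ f(\bar x),
\]
the sub-case $t(x)=0$ (i.e.\ $x\in F$) being covered by $f(x)=f(y)\to f(\bar x)$, and the case $\bar x\in\ri S$ corresponding to $j=k$ (there is then no ``$z$'' term) and being immediate from continuity on $\ri S$.

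\emph{Main obstacle.} The delicate point is that $\bar x$ need not lie in $\ri(\dom f)$, so $f$ may genuinely fail to be continuous at $\bar x$ as a function on $\mathbb{R}^n$; one therefore cannot simply quote the interior-continuity theorem for convex functions on all of $\mathbb{R}^n$. The decomposition $x=(1-t)y+tz$ is precisely what reduces a general approach $x\to\bar x$ within $S$ to an approach $y\to\bar x$ inside the face $F$ that carries $\bar x$ in its relative interior, plus a vanishing error controlled by the finite values $f(v_i)$, $i>j$; and the continuity has to be invoked for $f$ restricted to $\operatorname{aff}F$, where — because $F$ is full-dimensional in $\operatorname{aff}F$ — the point $\bar x$ does lie in the relative interior of the restricted effective domain. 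The remaining ingredients — continuity of the barycentric coordinates and the two reductions — are standard.
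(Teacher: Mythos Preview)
The paper does not give its own proof of this statement: it is recorded as a Fact with a citation to Rockafellar's \emph{Convex Analysis}, Theorem~10.2, and is used later as a black box. Your argument is correct and is essentially the classical proof found there --- the reduction from a locally simplicial set to a single simplex via a subsequence/pigeonhole argument, followed by the barycentric decomposition $x=(1-t)y+tz$ that pushes the approach into the face $F$ carrying $\bar x$ in its relative interior, where interior continuity of convex functions applies. The one place to be slightly careful is the line ``$\bar x$ lies in the relative interior of $\dom(f|_{\operatorname{aff}F})$'': this holds because $F\subseteq\dom(f|_{\operatorname{aff}F})\subseteq\operatorname{aff}F$ forces $\operatorname{aff}(\dom(f|_{\operatorname{aff}F}))=\operatorname{aff}F$, so relative interior coincides with interior in $\operatorname{aff}F$, and then $\intr_{\operatorname{aff}F}F\subseteq\intr_{\operatorname{aff}F}\dom(f|_{\operatorname{aff}F})$ is immediate from $F\subseteq\dom(f|_{\operatorname{aff}F})$. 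With that clarification your proof is complete.
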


As a consequence of Fact~\ref{fact:cont local simplicial} and the fact that every convex subset of $\mathbb{R}$ is an interval (and hence simplicial), it follows that every convex lsc function on $\mathbb{R}$ is necessarily continuous on its domain.

\section{Operators in One Dimension}\label{sec:operators in 1D}
 
 Before turning our attention to monotone operators, we recall first the class of convex functions 
originally studied in \cite{bauschke1997fenchel,bauschke2006symbolic,hamilton2005symbolic,borwein2009symbolic}.  
The \emph{Maple} package \emph{Symbolic Convex Analysis Toolbox (SCAT)} 
\cite{borwein2009symbolic} implements precisely this class of functions. Since the 
definition of the class is recursive in the dimension of the underlying domain, it is essential to 
understand the one-dimensional case first. 

Recall that a function $f:\mathbb{R}^n\to[-\infty,+\infty]$ is \emph{strictly convex} if
  \begin{equation*}
  	f(\lambda x+(1-\lambda)y) <\lambda f(x)+(1-\lambda)f(y),
  \end{equation*}
for all $x,y\in\dom f$ with $x\neq y$ and for all $\lambda\in(0,1)$.
 
\begin{definition}[$\mathcal{F}$-functions {\cite{bauschke1997fenchel,bauschke2006symbolic}}]\label{def:F functions}
	For a set of finitely many points $A=\{a_i\}_{i=1}^m$ satisfying 
	\begin{equation}\label{eq:set A}
		a_0=-\infty<a_1<\dots<a_{m-1}<a_m=+\infty,
	\end{equation} 
	we say a function $f:\mathbb{R}\to(-\infty,+\infty]$ belongs to $\mathcal{F}(A)$ if:
	\begin{enumerate}[(a)]
		\itemsep0em 
		\item\label{def:F functions a} $f$ is closed and convex;
		\item\label{def:F functions b} $f$ is continuous on its effective domain; and
		\item\label{def:F functions c} the restriction of $f$ to the interval $(a_i,a_{i+1})$ is either
              (i)~affine,
              (ii)~strictly convex and differentiable, or
              (iii)~identically equal to $+\infty$.
	\end{enumerate}
	The class of functions $\mathcal{F}$ is the union of $\mathcal{F}(A)$ over all finite sets of 
	points $A$ satisfying \eqref{eq:set A}    
\end{definition}

 Recalling that a function is closed ({\em i.e.,} its epigraph is a closed set) if and only if it is lsc, 
we observe that Condition~\eqref{def:F functions a} is 
equivalent to requiring that functions in $\mathcal{F}$ be either proper, lsc and convex, or 
identically equal to $+\infty$. 
Moreover, as a convex function is continuous on the relative interior of its domain 
\cite[Th.~10.1]{rockafellar1970convex}, 
the only place where Condition~\eqref{def:F functions b} can play a role is at boundary points 
of the effective domain.
 
 \begin{remark}
 	The presentation of Definition~\ref{def:F functions} given here differs slightly from the version given 
in \cite{bauschke1997fenchel,bauschke2006symbolic} in that we introduce the set $\mathcal{F}$ through the union 
of the sets $\mathcal{F}(A)$ rather than directly.
 \end{remark}

 One of the most important properties of $\mathcal{F}$-functions is that the class is closed under 
 Fenchel conjugation. This is ensures that a data-structure designed to represent functions belonging to $\mathcal{F}$ is also able to represent their conjugates. This closure property was noted in \cite{bauschke2006symbolic} without proof. We shall return to this topic later where our soon to be introduced class of monotone operator to furnish a convenient proof. Another important property of the subdifferentials of $\mathcal{F}$-functions is that they may be expressed explicitly  in terms of their gradient, when this exists. 
 \begin{proposition}[Computing $\mathcal{F}$-subdifferentials {\cite[\S2.1.2]{borwein2009symbolic}}]
\label{prop:computing subdiff F}
  Suppose $f\in\mathcal{F}(A)$ for $A=\{a_i\}_{i=1}^m$, and let $f|_i$ (resp. $f'|_i$) denote the 
restriction of $f$ (resp. $f'$) to the interval $(a_i,a_{i+1})$. Then $\partial f$ can be piecewise defined 
according to the following three cases.
  \begin{enumerate}[(a)]
    \item\label{prop:computing subdiff F a} If $x\not\in\dom f$ then $\partial f(x_0)=\emptyset$;
    \item\label{prop:computing subdiff F b} If $x\in\intr(\dom f)$ then
       $$ \partial f(x) = \left[\lim_{y\uparrow x}f'(y),\,\lim_{y\downarrow x}f'(y) \right]; $$
    \item\label{prop:computing subdiff F c} If $x\in\dom f\setminus\intr(\dom f)$ then $x=a_i$ 
for some $i\in\{1,2,\dots,m-1\}$. In this case
       $$\partial f(a_i)  = \begin{cases}
                              (-\infty,\,+\infty) & \text{if }f|_{i-1}=\infty=f|_i, \\
                              (-\infty,\,\lim_{y\downarrow a_i}f|_i'(y)] & \text{if }f|_{i-1}=\infty\neq f|_i, \\
                              [\lim_{y\uparrow a_i}f|_{i-1}'(y),\,+\infty) & \text{if }f|_{i-1}\neq\infty=f|_i. \\          
                            \end{cases}$$
  \end{enumerate}
 \end{proposition}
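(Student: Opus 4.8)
The plan is to reduce everything to standard facts about the one-sided derivatives of a convex function on the line, together with a description of how $\dom f$ is aligned with the partition $A$. Write $f'_-$ and $f'_+$ for the left and right derivatives of $f$; by convexity these exist in $[-\infty,+\infty]$ at every point and are non-decreasing, with $f'_+$ right-continuous and $f'_-$ left-continuous on $\intr(\dom f)$, and $f'_-(y)=f'_+(y)=f'(y)$ at every $y$ where $f$ is differentiable---in particular on each open piece $(a_i,a_{i+1})$ that is of type~(i) or~(ii) in condition~(c) of Definition~\ref{def:F functions}. Part~(a) is immediate from the definition of $\partial f$. For the bookkeeping needed in parts~(b) and~(c), observe that if $x$ lies in some open piece $(a_i,a_{i+1})$ and $x\in\dom f$, then $f|_i$ is not identically $+\infty$, hence finite on all of $(a_i,a_{i+1})\subseteq\dom f$, so $x\in\intr(\dom f)$; contrapositively, every $x\in\dom f\setminus\intr(\dom f)$ is a breakpoint, and being real it equals $a_i$ for some $i\in\{1,\dots,m-1\}$. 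Moreover at least one of $f|_{i-1},f|_i$ is identically $+\infty$, since otherwise continuity of $f$ on its effective domain (condition~(b) of Definition~\ref{def:F functions}) would force $f$ finite on the open interval $(a_{i-1},a_{i+1})\ni a_i$, again placing $a_i$ in the interior. This yields exactly the three alternatives in part~(c), and it also shows that when a breakpoint $a_i$ lies in $\intr(\dom f)$ both adjacent pieces have well-defined finite derivatives near $a_i$.

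For part~(b), invoke the classical identity $\partial f(x)=[f'_-(x),f'_+(x)]$ valid at any $x\in\intr(\dom f)$ (see, e.g., \cite{rockafellar1970convex}) and identify the endpoints: letting $y\to x$ through points of differentiability, right-continuity of $f'_+$ gives $f'_+(x)=\lim_{y\downarrow x}f'(y)$ and left-continuity of $f'_-$ gives $f'_-(x)=\lim_{y\uparrow x}f'(y)$. If $x$ is interior to a piece then (the derivative of a differentiable convex function being continuous) both limits collapse to $f'(x)$ and $\partial f(x)=\{f'(x)\}$; if $x=a_i$ the left limit is computed from $f|_{i-1}$ and the right limit from $f|_i$, which is the asserted formula.

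For part~(c) we already know $x=a_i$ and split into the three alternatives. If $f|_{i-1}=+\infty=f|_i$, then $\dom f$ is an interval containing $a_i$ but no point immediately to either side, hence $\dom f=\{a_i\}$; then $f(x)-f(a_i)=+\infty$ for every $x\neq a_i$, the subgradient inequality holds for every $\phi\in\mathbb{R}$, and $\partial f(a_i)=(-\infty,+\infty)$. If $f|_{i-1}=+\infty\neq f|_i$, then $a_i$ is the left endpoint of $\dom f$, so $\phi\in\partial f(a_i)$ iff $\phi\le\frac{f(x)-f(a_i)}{x-a_i}$ for every $x>a_i$ in $\dom f$ (the inequality being vacuous for $x<a_i$); since this difference quotient is non-decreasing in $x$ with infimum $\lim_{x\downarrow a_i}\frac{f(x)-f(a_i)}{x-a_i}=f'_+(a_i)=\lim_{y\downarrow a_i}f|_i'(y)\in[-\infty,+\infty)$, this says exactly $\partial f(a_i)=(-\infty,\lim_{y\downarrow a_i}f|_i'(y)]$, with the convention that the set is empty when the limit is $-\infty$. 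The remaining alternative $f|_{i-1}\neq+\infty=f|_i$ is entirely symmetric, with $a_i$ the right endpoint of $\dom f$ and $f'_-(a_i)=\lim_{y\uparrow a_i}f|_{i-1}'(y)$.

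The only mildly delicate point is the passage from ``differentiable on the open pieces'' to the one-sided-limit formulas, that is, verifying $\lim_{y\downarrow x}f'(y)=f'_+(x)$ and $\lim_{y\uparrow x}f'(y)=f'_-(x)$ at the breakpoints; this is precisely where monotonicity and one-sided continuity of $f'_\pm$, together with the fact that $f'=f'_\pm$ off a finite set, are used. Everything else is routine convex calculus on the line.
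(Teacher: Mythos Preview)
The paper does not supply its own proof of this proposition: it is quoted from \cite[\S2.1.2]{borwein2009symbolic} and used as a starting point for the subsequent development, so there is nothing to compare against directly. Your argument is the standard one-variable convex-analysis derivation and is correct. A couple of minor comments: in the case $f|_{i-1}=+\infty=f|_i$ you conclude $\dom f=\{a_i\}$; this uses (and you should perhaps make explicit) that $\dom f$ is an interval, so that a point of $\dom f$ flanked on both sides by points outside $\dom f$ forces the domain to be that singleton. Also, your identification $f'_+(a_i)=\lim_{y\downarrow a_i}f|_i'(y)$ at a boundary point relies on right-continuity of $f'_+$ at the left endpoint of $\dom f$, which in turn needs continuity of $f$ there; this is exactly what condition~(b) of Definition~\ref{def:F functions} provides, so the step is justified, but it is worth flagging since right-continuity of $f'_+$ is usually stated only on $\intr(\dom f)$.
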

 
  As we have already seen, the subdifferential of proper, lsc, convex function is always a 
maximal (cyclically) monotone operator (Fact~\ref{fact:cyclically monotonicity}). Thus, in light of the 
above proposition, we collect some of the finer monotonicity properties of the subdifferentials of 
$\mathcal{F}$-functions. The following lemma will simplify the proof of Proposition~\ref{prop:subdiff F}.

  \begin{lemma}\label{lem:subdiff f}
  	Let $f\in\mathcal{F}(A)$ be a proper function where $A=\{a_i\}_{i=0}^m$. Then the 
  	restriction of $\partial f$ to the interval $(a_i,a_{i+1})$ is either
  	 (i)~single-valued and constant,
  	 (ii)~single-valued, continuous and strictly monotone, or
  	 (iii)~identically equal to the empty-set.
  \end{lemma}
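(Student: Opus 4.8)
The plan is a case analysis driven entirely by Definition~\ref{def:F functions}\eqref{def:F functions c}, which says that on the open interval $(a_i,a_{i+1})$ the restriction $f|_i$ is one of exactly three types: (i)~affine, (ii)~strictly convex and differentiable, or (iii)~identically $+\infty$. I will show that these three possibilities produce, respectively, the three alternatives (i), (ii), (iii) of the lemma, by computing $\partial f$ on the interval in each case and reading off the claimed structure.

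First I would dispose of the two easy cases. If $f|_i\equiv+\infty$, then no point of $(a_i,a_{i+1})$ lies in $\dom f$, so $\partial f(x)=\emptyset$ there by Proposition~\ref{prop:computing subdiff F}\eqref{prop:computing subdiff F a} (equivalently, straight from the defining formula for $\partial f$), giving alternative (iii). If instead $f$ is finite on the interval --- cases (i) and (ii) --- then, $(a_i,a_{i+1})$ being open, $(a_i,a_{i+1})\subseteq\intr(\dom f)$, and near each such point $f$ coincides with $f|_i$; hence $f$ is differentiable at every $x\in(a_i,a_{i+1})$, with $f'(x)$ equal to the common slope $c$ in case (i) and to $(f|_i)'(x)$ in case (ii). Since a convex function that is differentiable at a point has the singleton of its gradient as its subdifferential there (which is also what Proposition~\ref{prop:computing subdiff F}\eqref{prop:computing subdiff F b} yields once its two one-sided limits collapse to $f'(x)$), case (i) produces $\partial f(x)=\{c\}$ throughout the interval: single-valued and constant, i.e.~alternative (i).

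The remaining case (ii) carries the real content, though even it is short: single-valuedness, $\partial f(x)=\{f'(x)\}$, is already in hand from the previous step; continuity of the map $x\mapsto f'(x)$ on $(a_i,a_{i+1})$ is the standard fact that the derivative of a convex function is continuous on the open set where it exists; and for strict monotonicity I would argue by contradiction --- convexity makes $f'$ nondecreasing on $(a_i,a_{i+1})$, so if $f'(x)=f'(x')$ for some $x<x'$ in the interval then $f'$ is constant on $[x,x']$, whence $f$ is affine there, contradicting the strict convexity of $f|_i$; thus $f'$ is strictly increasing and $\partial f$, restricted to $(a_i,a_{i+1})$, is strictly monotone, which is alternative (ii). I expect the only genuine point requiring care to be the bookkeeping in the middle step: one must invoke Definition~\ref{def:F functions}\eqref{def:F functions c} to see that each open piece $(a_i,a_{i+1})$ is either entirely contained in $\dom f$ or entirely disjoint from it, so that in the finite cases every point of it is interior to $\dom f$ and no one-sided subdifferential from Proposition~\ref{prop:computing subdiff F}\eqref{prop:computing subdiff F c} ever intrudes; everything else is elementary one-dimensional convex analysis.
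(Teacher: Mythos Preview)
Your proposal is correct and follows essentially the same approach as the paper's own proof: a three-way case split driven by Definition~\ref{def:F functions}\eqref{def:F functions c}, mapping affine, strictly convex differentiable, and $+\infty$ pieces respectively to alternatives (i), (ii), (iii) of the lemma. The only difference is cosmetic---where the paper cites references for the continuity of $f'$ and for the implication ``strictly convex $\Rightarrow$ $f'$ strictly monotone'', you supply the short direct contradiction argument for the latter and add some explicit bookkeeping about $(a_i,a_{i+1})\subseteq\intr(\dom f)$.
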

  \begin{proof}
  	Consider the restriction of $f$ to the open interval $(a_i,a_{i+1})$. We distinguish three cases based on Definition~\ref{def:F functions}\eqref{def:F functions c}. 
  	 (i)~If $f$ is affine on $(a_i,a_{i+1})$ then $f'$ is single-valued and constant.
  	 (ii)~If $f$ is	differentiable on $(a_i,a_{i+1})$ then $f'$ is continuous on 
  	$(a_i,a_{i+1})$ \cite[Thm.~25.5.1]{rockafellar1970convex}, and if $f$ is strictly convex on $(a_i,a_{i+1})$ then $f'$ is strictly monotone on $(a_i,a_{i+1})$, by \cite[Exer.~2.1.14]{borweinvanderwerff2010convex} and \cite[Thm. 12.17]{VA}.
  	(iii)~Otherwise, by virtue belonging to 
  	$\mathcal{F}$, $f$ must be  identically equal to 
  	$+\infty$ on $(a_i,a_{i+1})$ and, by definition, its subdifferential is identically 
  	equal to the empty-set on $(a_i,a_{i+1})$.\qed
  \end{proof}

 \begin{proposition}[Structure of $\mathcal{F}$-subdifferentials]\label{prop:subdiff F}
 	Let $f\in\mathcal{F}(A)$ be a proper function where
$A=\{a_i\}_{i=0}^m$. The following assertions hold.
 	\begin{enumerate}[(a)]
        \itemsep0em 
 		\item\label{prop:subdiff F b} The restriction of $\partial f$ to the interval $(a_i,a_{i+1})$ is 
		  either (i)~single-valued and constant, (ii)~single-valued, continuous and strictly monotone, or 
		  (iii)~identically equal to the empty-set.
 		\item\label{prop:subdiff F c} For any $x\in\dom f\setminus\intr(\dom f)$, $x=a_i$ for some $i\in\{1,m-1\}$ and
 		  $$\begin{cases}
 		   \,\hphantom{\max}\partial f(a_i) = (-\infty,\,+\infty) & \text{if }f|_{i-1}=\infty=f|_i, \\
 		   \max\partial f(a_i) = \lim_{x\downarrow a_i}f|_i'(x) & \text{if }f|_{i-1}=\infty\neq f|_i, \\
 		   \,\min\partial f(a_i) = \lim_{x\uparrow a_i}f|_{i-1}'(x) & \text{if }f|_{i-1}\neq\infty=f|_i; \\          
 		  \end{cases}$$
 		where, by convention, $\min\emptyset=+\infty$ and $\max\emptyset=-\infty$.
 	\end{enumerate}
 \end{proposition}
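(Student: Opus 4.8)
The plan is to dispatch the two assertions separately, leaning on results already established. Assertion~\eqref{prop:subdiff F b} requires no new argument: it is word-for-word the statement of Lemma~\ref{lem:subdiff f}, so I would simply cite it.

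For assertion~\eqref{prop:subdiff F c}, the first step is to observe that every point of $\dom f\setminus\intr(\dom f)$ is one of the breakpoints $a_1,\dots,a_{m-1}$. This is forced by Definition~\ref{def:F functions}\eqref{def:F functions c}: on each open interval $(a_i,a_{i+1})$ the restriction of $f$ is either identically $+\infty$, so the interval is disjoint from $\dom f$, or finite-valued, in which case the (open) interval lies inside $\intr(\dom f)$; in neither situation can an interior point of such an interval belong to $\dom f\setminus\intr(\dom f)$. Having written $x=a_i$, I would invoke Proposition~\ref{prop:computing subdiff F}\eqref{prop:computing subdiff F c}, which already describes $\partial f(a_i)$ as an explicit, possibly unbounded, interval in each of three cases — noting that the a priori fourth case, both neighbouring pieces finite, is excluded here since it would place a whole neighbourhood of $a_i$ in $\dom f$ and thus contradict $a_i\notin\intr(\dom f)$. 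It then remains only to translate ``$\partial f(a_i)=(-\infty,c]$'' into ``$\max\partial f(a_i)=c$'', and symmetrically for the lower-bounded case, which is where a little care is needed.

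The one genuinely delicate point — and the step I would treat as the main obstacle — is reconciling the closed-interval formulas of Proposition~\ref{prop:computing subdiff F} with the conventions $\min\emptyset=+\infty$, $\max\emptyset=-\infty$ in the degenerate situations where a one-sided derivative limit is infinite. Here I would use that $f|_i'$ (respectively $f|_{i-1}'$), being the derivative of a convex function on an open interval, is nondecreasing there; hence the relevant one-sided limit $c=\lim_{x\downarrow a_i}f|_i'(x)$ exists in $[-\infty,+\infty)$, and in particular $c=+\infty$ cannot occur because $f|_i'$ is bounded above near $a_i$ by its value at any interior point. If $c$ is finite then $\max\partial f(a_i)=c$ directly; if $c=-\infty$ then $(-\infty,c]=\emptyset$ and the convention $\max\emptyset=-\infty=c$ keeps the identity valid. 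The case $f|_{i-1}\neq\infty=f|_i$ is handled identically with all inequalities reversed (the limit now lying in $(-\infty,+\infty]$), while the case $f|_{i-1}=\infty=f|_i$ is immediate from Proposition~\ref{prop:computing subdiff F}, completing the argument.
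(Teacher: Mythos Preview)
Your proof is correct and follows exactly the paper's approach: part~\eqref{prop:subdiff F b} is dispatched by citing Lemma~\ref{lem:subdiff f}, and part~\eqref{prop:subdiff F c} by appealing to Proposition~\ref{prop:computing subdiff F}\eqref{prop:computing subdiff F c}. The paper's own proof is a one-liner for each part, so your extra care with why boundary points must be breakpoints and with the $\min/\max$ conventions in the degenerate cases goes beyond what the paper spells out, but the underlying argument is identical.
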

\begin{proof}
 \eqref{prop:subdiff F b}:~Follows by applying Lemma~\ref{lem:subdiff f} to each interval $(a_i,a_{i+1})$. 
 \eqref{prop:subdiff F c}:~This is a direct consequence of Proposition~\ref{prop:computing subdiff F}\eqref{prop:computing subdiff F c}.\qed
\end{proof}
 
 Motivated by Propositions~\ref{prop:computing subdiff F} and \ref{prop:subdiff F}, we define the following class of monotone 
operators. We shall show that, in particular, it contains all subdifferentials of $\mathcal{F}$-functions.
 
 \begin{definition}[$\mathcal{T}$-operators]\label{def:T operators}
 	For a set of finitely many points $B=\{b_i\}_{i=0}^l$ satisfying
 	\begin{equation}\label{eq:set B}
 	b_0=-\infty<b_1<\dots<b_{l-1}<b_l=+\infty,
 	\end{equation}
 	we say a set-valued operator $T:\mathbb{R}\setto\mathbb{R}$ belongs to $\mathcal{T}(B)$ if there 
	exists a maximal monotone extension $\widetilde{T}$ of $T$ such that the restriction 
	 $\widetilde{T}$ to each interval $(b_i,b_{i+1})$ is either
 	\begin{enumerate}[(i)]
 	\item single-valued and constant;
 	\item single-valued, continuous and strictly monotone; or 
 	\item identically equal to the empty-set.
 	\end{enumerate}
	The class of operators $\mathcal{T}$ is the union of $\mathcal{T}(B)$ over all finite sets of points 
      $B$ satisfying \eqref{eq:set B}.
 \end{definition} 

  The proposition which soon follows establishes that the class of $\mathcal{T}$-operators is well-suited for symbolic manipulation. Moreover, as a monotone operator can have only countably many discontinuities in its domain, the restriction to monotone operators possessing at most finitely many discontinuities is still quite general.
 
\begin{remark}[$\mathcal{T}$-operators at points of discontinuity]
	Let $T\in\mathcal{T}$ with maximal monotone extension $\widetilde{T}\in\mathcal{T}(B)$  
where $B=\{b_i\}_{i=1}^l$. From the definition of $\mathcal{T}$-operators, the only possible 
points of discontinuity of $T$ are the points in $B$. At a point $b_i\in\intr(\dom\widetilde{T})$ for 
some $i\in\{1,\dots,l-1\}$, the restriction of  $\widetilde{T}$ to either of the open intervals 
$(b_{i-1},b_i)$ and $(b_i,b_{i+1})$ is continuous and hence, by monotonicity, both of the 
limits $\lim_{x\uparrow b_i}\widetilde{T}(x)$ and $\lim_{x\downarrow b_i}\widetilde{T}(x)$ 
are finite. We therefore have that
	  $$ T(b_i)\subseteq \widetilde{T}(b_i)=\left[\lim_{x\uparrow b_i}T(x),\,\lim_{x\downarrow b_i}T(x)\right], $$
	where the equality holds due to outer semi-continuity of $\widetilde{T}$ 
	\cite[\S4.2]{burachik2007set}.
\end{remark}

 The following theorem shows that all of the most important closure properties hold for the 
class of $\mathcal{T}$-operators.
\begin{proposition}[Properties of $\mathcal{T}$-operators]\label{prop:T operator properties}
	The following assertions hold.
	\begin{enumerate}[(a)]
		\item\label{prop:T operator properties a} If $T\in\mathcal{T}$ and $\lambda\geq 0$ 
then $\lambda T\in\mathcal{T}$.	    
		\item\label{prop:T operator properties b} If $T_1,T_2\in\mathcal{T}$ then 
$T_1+T_2\in\mathcal{T}$. 
	    \item\label{prop:T operator properties c} If $T\in\mathcal{T}$ then 
$T^{-1}\in\mathcal{T}$.
	    \item\label{prop:T operator properties d} If $T\in\mathcal{T}$ and $\lambda>0$ 
then $(I+\lambda T)^{-1}\in\mathcal{T}$.
        \item\label{prop:T operator properties e} If $T_1,T_2\in\mathcal{T}$ then 
        $(T_1^{-1}+T_2^{-1})^{-1}\in\mathcal{T}$. 
	\end{enumerate}
\end{proposition}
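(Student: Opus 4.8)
The plan is to prove the five closure properties by reducing them, wherever possible, to the defining structure of a $\mathcal{T}$-operator: the existence of a maximal monotone extension whose restriction to each open interval of a finite partition is of one of the three prescribed types. The key organizing observation is that properties (d) and (e) should not be proved from scratch but derived from (a), (b), (c): indeed, $(I+\lambda T)^{-1} = \bigl(\lambda^{-1}(\lambda I + \lambda^2 T)\bigr)^{-1} \cdot \lambda$ — more cleanly, $(I+\lambda T)^{-1}$ is obtained from $I+\lambda T$ by (c), and $I+\lambda T = I + (\lambda T)$ is a sum (b) of the identity $I$ (trivially in $\mathcal{T}$, being single-valued, continuous and strictly monotone on all of $\mathbb{R}$) with $\lambda T \in \mathcal{T}$ by (a); similarly $(T_1^{-1}+T_2^{-1})^{-1}$ follows from (c) applied twice and (b) once. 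So the real work is (a), (b), (c).

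For (a), scalar multiplication by $\lambda \ge 0$: if $\lambda = 0$ then $\lambda T$ is either the zero operator or (off $\dom T$) empty, which lies in $\mathcal{T}$ with the trivial partition; if $\lambda > 0$, then given a maximal monotone extension $\widetilde T$ of $T$ associated to a partition $B$, I would check that $\lambda\widetilde T$ is again maximal monotone (scaling by a positive constant is a bijection on graphs preserving monotonicity and maximality) and that on each interval $(b_i,b_{i+1})$ the three alternatives are each preserved under multiplication by $\lambda>0$ — constant stays constant, strictly monotone continuous single-valued stays so, empty stays empty. Hence $\lambda T \in \mathcal{T}(B)$. For (c), the inverse: $T^{-1}$ has graph obtained from $\gph T$ by swapping coordinates, so $\widetilde T^{-1}$ is maximal monotone. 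The subtlety is that the partition points change — the breakpoints of $T^{-1}$ are the values taken by $\widetilde T$ at its old breakpoints together with the one-sided limits at the ends of the strictly-monotone pieces. One must argue that there are still only finitely many such points and that between consecutive ones $\widetilde T^{-1}$ is again of one of the three types. Here the clean way is to invoke Fact~\ref{fact:cyclically monotonicity}: a maximal monotone operator on $\mathbb{R}$ is maximal cyclically monotone (since on the line monotone = cyclically monotone), hence $\widetilde T = \partial f$ for some proper lsc convex $f$, and then $\widetilde T^{-1} = \partial f^*$; one shows $f^* \in \mathcal{F}$ and applies Proposition~\ref{prop:subdiff F}. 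This is really an instance of the promised correspondence (Theorem~\ref{th:F and T}), so the honest structure of the proof is: first establish that correspondence, then read off (a)--(e) from the known closure properties of $\mathcal{F}$ under positive scalar multiples, conjugation, and — for sums — inf-convolution.

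For (b), the sum: given maximal monotone extensions $\widetilde T_1 \in \mathcal{T}(B_1)$, $\widetilde T_2 \in \mathcal{T}(B_2)$, form the common refinement $B = B_1 \cup B_2$, so that on each open interval of $B$ each $\widetilde T_j$ is single-valued and either constant or strictly monotone continuous (or empty, in which case the sum is empty there). On such an interval the pointwise sum is single-valued and either constant (if both are) or strictly monotone and continuous (if at least one is strictly monotone) — sums of monotone are monotone, strict plus non-strict is strict, and continuity is preserved. So $\widetilde T_1 + \widetilde T_2$ restricted to $\dom\widetilde T_1 \cap \dom\widetilde T_2$ has the right piecewise structure; the remaining point is that $\widetilde T_1 + \widetilde T_2$ admits a maximal monotone extension of the required form. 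One cannot expect $\widetilde T_1 + \widetilde T_2$ itself to be maximal without a constraint qualification, but on $\mathbb{R}$ the issue is only at the endpoints of the common effective domain $D := \dom\widetilde T_1 \cap \dom\widetilde T_2$ (an interval): there one simply re-completes the operator by adjoining the appropriate vertical half-line at each finite endpoint of $D$, exactly as in Proposition~\ref{prop:subdiff F}(c). Equivalently, and more slickly, write $\widetilde T_j = \partial f_j$ and use that $\partial f_1 + \partial f_2 \subseteq \partial(f_1+f_2)$ with $\partial(f_1+f_2)$ maximal (Fact~\ref{fact:maximal monotone sum}, with the CQ automatic since $f_1+f_2$ is still an $\mathcal{F}$-function once one checks $\mathcal{F}$ is closed under addition), and then invoke Proposition~\ref{prop:subdiff F} for $f_1+f_2$.

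The main obstacle I anticipate is the bookkeeping for (c): tracking how the finite partition transforms under inversion and verifying that no new breakpoint accumulates — in particular handling flat pieces of $\widetilde T$ (which become jumps, i.e. points, of $\widetilde T^{-1}$) and jumps of $\widetilde T$ (which become flat pieces of $\widetilde T^{-1}$), as well as the behaviour at $\pm\infty$. Routing everything through the $\mathcal{F}$-$\mathcal{T}$ correspondence and the explicit description in Propositions~\ref{prop:computing subdiff F}--\ref{prop:subdiff F} is what keeps this under control; the price is that one owes a proof that $\mathcal{F}$ is closed under addition, positive scalar multiplication and Fenchel conjugation, the last of which is the promised (and, for the line, classical) fact that the SCAT data-structure is closed under conjugation.
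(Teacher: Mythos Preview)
Your reductions of (d) and (e) to (a)--(c), your treatment of (a), and your first approach to (b) --- common refinement of breakpoints, check the three alternatives on each piece --- all match the paper's proof. (For the maximality issue in (b) the paper is slightly more explicit: it splits into the three cases $\dom(\widetilde T_1+\widetilde T_2)$ empty, a singleton, or having non-empty interior, invoking Fact~\ref{fact:maximal monotone sum} only in the last case.)

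The substantive divergence is in (c). The paper does \emph{not} route through the $\mathcal F$--$\mathcal T$ correspondence. It proves (c) directly: given a maximal monotone extension $\widetilde T\in\mathcal T(B)$, it writes $\dom\widetilde T^{-1}=\range\widetilde T$ as the finite union of the images $\widetilde T((b_i,b_{i+1}))$ and the sets $\widetilde T(b_i)$, and analyzes $\widetilde T^{-1}$ on each piece --- strictly monotone continuous pieces invert to strictly monotone continuous pieces, constant pieces become single points in the new domain, and the intervals $\widetilde T(b_i)$ become constant pieces of $\widetilde T^{-1}$, with a short contradiction argument ruling out multi-valuedness on their interiors. This is precisely the ``bookkeeping'' you identified as the main obstacle and then sought to avoid.

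Your alternative --- write $\widetilde T=\partial f$ via Theorem~\ref{th:F and T}, pass to $\widetilde T^{-1}=\partial f^*$, and invoke $f^*\in\mathcal F$ --- reverses the paper's logical flow. In the paper, closure of $\mathcal F$ under Fenchel conjugation is Proposition~\ref{prop:conj F is F}, and its proof \emph{uses} part~(c) of the present proposition. So within this development your route is circular. You acknowledge the debt and call the missing ingredient ``classical'', but the paper explicitly remarks that this closure was previously stated \emph{without} proof; supplying an independent proof of it amounts to carrying out, on the $\mathcal F$ side, exactly the piecewise inversion analysis the paper performs on the $\mathcal T$ side. Your strategy for (c) can be made to work, but only by doing elsewhere the work the paper does here.
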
	
\begin{proof}
	\eqref{prop:T operator properties a}:~Let $\widetilde{T}\in\mathcal{T}$ be a maximal 
monotone extension of $T$ and $\lambda\geq 0$. Then $\lambda\widetilde{T}\in\mathcal{T}$ 
and, moreover, $\lambda\widetilde{T}$ is a maximal monotone extension of $\lambda T$.
	
	\eqref{prop:T operator properties b}:~Define $T:=T_1+T_2$ and let $\widetilde{T}_1$ 
and $\widetilde{T}_2$ denote maximal monotone extensions respectively of $T_1$ and $T_2$ 
contained in  $\mathcal{T}$. Setting $\widetilde{T}:=\widetilde{T}_1+\widetilde{T}_2$, 
we therefore have that
	\begin{equation}\label{eq:dom T}
		\dom\widetilde{T}=\dom\widetilde{T}_1\cap\dom\widetilde{T}_2 
\supseteq \dom T_1\cap\dom T_2=\dom T.
	\end{equation}
	We now distinguish three cases based on $\dom\widetilde{T}$.
	\begin{enumerate}[(i)]
	 \item Suppose $\dom\widetilde{T}=\emptyset$. Then, using \eqref{eq:dom T}, 
it follows that $\dom T=\emptyset$. As the empty relation is trivially contained in $\mathcal{T}$, 
we have that $T\in\mathcal{T}$.
	
	 \item Suppose $\dom\widetilde{T}\neq\emptyset$ but $\intr(\dom\widetilde{T})=\emptyset$. 
Since $\dom\widetilde{T}$ is the intersection of two convex sets, $\dom\widetilde{T}_1$ and 
$\dom\widetilde{T}_2$, it follows that $\dom\widetilde{T}$ is a singleton, say, 
$\dom\widetilde{T}=\{x_0\}$. In this case, the operator
	$$ x\mapsto \begin{cases}
	(-\infty,+\infty) & \text{if }x=x_0, \\
	\emptyset         & \text{otherwise} \\
	\end{cases}$$
	defines a maximal monotone extension of $\widetilde{T}$, and hence also defines a 
maximal monotone extension of $T$, which is contained in $\mathcal{T}$.
	
	 \item Suppose $\intr(\dom\widetilde{T})\neq\emptyset$. Then 
$0\in\intr(\dom\widetilde{T}_1-\dom\widetilde{T}_2)$ and hence, by   
Fact~\ref{fact:maximal monotone sum},  the extension $\widetilde{T}$ is 
maximal monotone. 
Let $\{b_i\}_{i=1}^l$ denote the union of the sets of breakpoints for $\widetilde{T}_1$ 
and $\widetilde{T}_2$, provided by Definition~\ref{def:T operators}. To see that 
$\widetilde{T}\in\mathcal{T}$, observe that the restriction of $\widetilde{T}$ to each 
open interval $(b_i,b_{i+1})$ is either single-valued and continuous, single-valued and 
strictly monotone, or identically equal to the empty-set.
	\end{enumerate}
	
	\eqref{prop:T operator properties c}:~Let $\widetilde{T}\in\mathcal{T}(B)$ be a maximal monotone extension of $T$. Since $\widetilde{T}^{-1}$ is a maximal monotone extension of $T^{-1}$, it suffices to show that $\widetilde{T}^{-1}\in\mathcal{T}$. To this end, observe that
	\begin{equation}\label{eq:dom Tinv}
	\dom\widetilde{T}^{-1}=\range\widetilde{T} = \left(\bigcup_{i=0}^l \widetilde{T}((b_i,b_{i+1}))\right)\cup\left(\bigcup_{i=1}^{l-1}\widetilde{T}(b_i)\right),
	\end{equation}
	where we denote $\widetilde{T}((b_i,b_{i+1})):=\{y\in\widetilde{T}(x):x\in(b_i,b_{i+1})\}$. 
	Both $\widetilde{T}$ and $\widetilde{T}^{-1}$ are maximal monotone and closed convex-valued 
	\cite[Exerc.12.8]{VA}.  To show that $\widetilde{T}^{-1}\in\mathcal{T}$, it suffices to show that on 
	each piece of its domain specified by  \eqref{eq:dom Tinv} which is not a singleton, that 
	$\widetilde{T}^{-1}$ is single-valued and either constant, or strictly monotone and hence continuous 
	by maximal monotonicity. To see this, we distinguish the following cases, using the fact that $\widetilde{T}\in\mathcal{T}$.
	
	\begin{enumerate}[(i)]    	
		\item\label{c:case 1} Consider a piece in \eqref{eq:dom Tinv} of the form $\widetilde{T}((b_i,b_{i+1}))$. 
		  There are thus  two possibilities:
		\begin{enumerate}[(I)]
			\item \emph{$\widetilde{T}$ is single-valued and constant with value $c$ on $(b_{i},b_{i+1})$}: 
			In this case $\widetilde{T}^{-1}(c)$ is a closed interval containing $(b_{i},b_{i+1})$. 
			
			\item \emph{$\widetilde{T}$ is single-valued, continuous and strictly monotone 
			on $(b_{i},b_{i+1})$}: In this case, $\widetilde{T}^{-1}$ is single-valued, continuous and strictly monotone on $U=\widetilde{T}( (b_i,b_{i+1}) )$ and, moreover, $U$ is an interval \cite[Th.~5.11.14]{vakil2011real}.
		\end{enumerate}
		
		\item\label{c:case 2} Next consider a piece in \eqref{eq:dom Tinv} of the form $\widetilde{T}(b_i)$ where $b_i\in\dom\widetilde{T}$. Then $b_i\in \widetilde{T}^{-1}(y)$ for all $y\in\widetilde{T}(b_i)$ where $\widetilde{T}(b_i)$ is a closed convex set due to the maximal monotonicity of $\widetilde{T}$. If $\intr\widetilde{T}(b_i)=\emptyset$ then $\widetilde{T}(b_i)$ is a singleton and there is 
		nothing further to prove.  Suppose, then, that the open interval $U:=\intr\widetilde{T}(b_i)$ is non-empty. 
		There are two possibilities.
		\begin{enumerate}[(I)]
			\item \emph{$\widetilde{T}^{-1}$ is single-valued on $U$}:   Then $\widetilde{T}^{-1}(y)=\{b_i\}$ for all $y\in U$. 
			
			\item \emph{$\widetilde{T}^{-1}$ is multi-valued on $U$}:m   Then there exist points $y_0\in U$ and 
			$x_0\neq b_i$ such that $x_0\in\widetilde{T}^{-1}(y_0)$ and 
			$y_0> \underline{y}\geq \inf\widetilde{T}(b_i)$. 
			For convenience, we assume that 
			$x_0<b_i$; an analogous argument applies when $x_0>b_i$. Since $\widetilde{T}^{-1}$ 
			is closed and convex-valued,  $[x_0,b_i]\subseteq\widetilde{T}^{-1}(y_0)$ and hence 
			$y_0\in\widetilde{T}(x)$ for all $x\in[x_0,b_i]$. Since $\widetilde{T}\in\mathcal{T}$, it must be 
			single-valued on $(b_{i-1},b_i)$,  hence $\widetilde{T}x=\{y_0\}$ for all $x\in (b_{i-1},b_i)$. 
			Since $\widetilde{T}$ is maximal monotone, it must hold that $y_0=\inf T(b_i)\leq \underline{y}$
			which is a contradiction.
		\end{enumerate}
		We conclude, therefore, that $\widetilde{T}^{-1}(y)=\{b_i\}$ for all $y\in \widetilde{T}(b_i)$.  
	\end{enumerate}
	Cases \eqref{c:case 1} and \eqref{c:case 2} together imply that $\widetilde{T}^{-1}\in \mathcal {T}$ which completes the proof of  \eqref{prop:T operator properties c}.

 \eqref{prop:T operator properties d}:~By \eqref{prop:T operator properties a} it 
		follows that $\lambda T\in\mathcal{T}$. Noting that the identity operator $I$ is a 
		maximal monotone operator contained in $\mathcal{T}$ with $\dom I=\mathbb{R}$, 
		\eqref{prop:T operator properties b} implies that $I+\lambda T\in\mathcal{T}$. The 
		result now follows from \eqref{prop:T operator properties c}.
		
 \eqref{prop:T operator properties e}:~This follows immediately from parts~\eqref{prop:T operator properties b} and \eqref{prop:T operator properties c}.\qed
\end{proof}

\begin{example}[Examples of $\mathcal{T}$-operators]
	Proposition~\ref{prop:T operator properties}\eqref{prop:T operator properties d} ensures that
 the \emph{resolvent} of any $\mathcal{T}$-operator belongs to $\mathcal{T}$. 
In particular, $\mathcal{T}$ contains all \emph{proximity mappings} of $\mathcal{F}$-functions. 
In other words, if $f\in\mathcal{F}$ and $\lambda>0$ then
	  $$\prox_{f}^\lambda:=
\argmin_{y\in\mathbb{R}}\left\{f(y)+\frac{1}{2\lambda}\|\cdot-y\|^2\right\}
=(I+\lambda \partial f)^{-1}\in\mathcal{T}.$$
    In particular, by considering the indicator function contained in $\mathcal{F}$, 
we see that $\mathcal{T}$ contains all \emph{projection operators} onto closed, 
convex subsets of $\mathbb{R}$.
\end{example}

\begin{remark}[Maximal monotone extensions of $T\in\mathcal{T}$]
  When defining the class of $\mathcal{T}$-operators in 
Definition~\ref{def:T operators}, one might have instead required an 
operator $T\in\mathcal{T}$  to be maximal monotone itself rather 
than its extension in $\mathcal{T}$. This approach has a significant 
shortcoming in that the empty relation is no longer in $\mathcal{T}$. 
Consequently, Proposition~\ref{prop:T operator properties}\eqref{prop:T operator 
properties b} no longer holds as can be seen by considering the sum 
two maximal monotone operators whose domains do not intersect.
\end{remark}

 The following theorem summarizes the connection between $\mathcal{F}$-functions and 
 $\mathcal{T}$-operators.
 \begin{theorem}\label{th:F and T}
 	If $f\in\mathcal{F}$ is proper then $\partial f$ is maximal monotone and belongs to 
 	$\mathcal{T}$. Conversely, if $T\in\mathcal{T}$ is maximal monotone then there exists a 
 	proper, lsc, convex function $f$ such that $T=\partial f$ and, moreover, any such function 
 	belongs to $\mathcal{F}$.
 \end{theorem}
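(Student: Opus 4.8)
The plan is to prove the two directions separately, with the ``hard'' analytic work concentrated in the converse direction. For the forward direction, suppose $f \in \mathcal{F}$ is proper. Then by Definition~\ref{def:F functions}\eqref{def:F functions a}, $f$ is proper, lsc and convex, so by Fact~\ref{fact:cyclically monotonicity} its subdifferential $\partial f$ is maximal (cyclically) monotone. To show $\partial f \in \mathcal{T}$, I take $f \in \mathcal{F}(A)$ with $A = \{a_i\}_{i=0}^m$ and simply exhibit $\widetilde{T} := \partial f$ as its own maximal monotone extension: Proposition~\ref{prop:subdiff F}\eqref{prop:subdiff F b} says precisely that the restriction of $\partial f$ to each open interval $(a_i,a_{i+1})$ is single-valued and constant, single-valued continuous and strictly monotone, or identically empty. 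Hence $\partial f \in \mathcal{T}(A) \subseteq \mathcal{T}$. This direction is essentially a bookkeeping exercise once Proposition~\ref{prop:subdiff F} is in hand.

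For the converse, suppose $T \in \mathcal{T}$ is maximal monotone. Since $T$ itself is maximal monotone and $\mathcal{T}$-operators admit a maximal monotone extension that is again in $\mathcal{T}$, and since a maximal monotone operator has no proper monotone extension, I may take the extension $\widetilde{T}$ in Definition~\ref{def:T operators} to be $T$ itself; thus $T \in \mathcal{T}(B)$ for some $B = \{b_i\}_{i=0}^l$ with the stated interval structure. On the real line, monotone and cyclically monotone coincide (the remark after Fact~\ref{fact:cyclically monotonicity}, citing \cite[Th.~22.18]{bauschke2011convex}), so $T$ is maximal cyclically monotone, and Fact~\ref{fact:cyclically monotonicity} then yields a proper, lsc, convex $f$ with $T = \partial f$. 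This $f$ is unique up to an additive constant. The substantive claim is that \emph{any} such $f$ lies in $\mathcal{F}$, i.e.\ that $f \in \mathcal{F}(B)$ (possibly after refining $B$), verifying conditions \eqref{def:F functions a}--\eqref{def:F functions c} of Definition~\ref{def:F functions}. Condition \eqref{def:F functions a} is immediate since $f$ is proper, lsc, convex. Condition \eqref{def:F functions b}, continuity on $\dom f$, follows from Fact~\ref{fact:cont local simplicial}: $\dom f \subseteq \mathbb{R}$ is an interval, hence locally simplicial, and $f$ is lsc, so $f$ is continuous on $\dom f$ (this is exactly the remark closing Section~2).

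The crux is condition \eqref{def:F functions c}: on each open interval $(b_i, b_{i+1})$, $f$ restricted there must be affine, strictly convex and differentiable, or identically $+\infty$, matching the three cases for $T = \partial f$ on that interval. If $\partial f$ is identically empty on $(b_i,b_{i+1})$, then every point of that interval lies outside $\dom(\partial f)$; since a proper lsc convex function on $\mathbb{R}$ is subdifferentiable throughout the interior of its domain, the open interval misses $\intr(\dom f)$, and convexity of $\dom f$ then forces $f \equiv +\infty$ on $(b_i,b_{i+1})$. If $\partial f$ is single-valued and constant $\equiv c$ on $(b_i,b_{i+1})$, then $f' \equiv c$ there, so $f$ is affine on that interval. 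If $\partial f$ is single-valued, continuous and strictly monotone on $(b_i,b_{i+1})$, then $f$ is differentiable there with $f' = \partial f$ continuous, and strict monotonicity of $f'$ gives strict convexity of $f$ on $(b_i,b_{i+1})$ (by the same characterization used in Lemma~\ref{lem:subdiff f}, via \cite[Exer.~2.1.14]{borweinvanderwerff2010convex} and \cite[Thm.~12.17]{VA}). I expect the main obstacle to be the careful handling of the endpoints $b_i$ and of the passage from ``single-valued on the open interval'' to the global differentiability/strict-convexity statements needed to land exactly in $\mathcal{F}(B)$ — in particular making sure that the breakpoint set $B$ witnessing $T \in \mathcal{T}(B)$ can be used (after possibly adjoining finitely many points, e.g.\ boundary points of $\dom f$) as the witnessing set for $f \in \mathcal{F}(B)$, and conversely that no spurious ``type changes'' of $f$ occur strictly inside a single $(b_i,b_{i+1})$, which is guaranteed precisely because $\partial f = T$ has a single behaviour type there.
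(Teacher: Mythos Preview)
Your proposal is correct and follows essentially the same approach as the paper: invoke Proposition~\ref{prop:subdiff F} for the forward direction, and for the converse use Fact~\ref{fact:cyclically monotonicity} (via the coincidence of monotone and cyclically monotone on $\mathbb{R}$) to produce $f$, check Definition~\ref{def:F functions}\eqref{def:F functions a}--\eqref{def:F functions b} via lower semicontinuity and Fact~\ref{fact:cont local simplicial}, and verify \eqref{def:F functions c} by reading off the affine/strictly-convex/$+\infty$ behaviour of $f$ on each $(b_i,b_{i+1})$ from the three possible behaviours of $T=\partial f$ there. Your version is in fact slightly more explicit than the paper's in justifying why the maximal monotone extension $\widetilde{T}$ may be taken to be $T$ itself and in handling the empty case, but the logical skeleton is the same.
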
   
 \begin{proof}
 	If $f\in\mathcal{F}$ is a proper function then the fact that $\partial f$ is maximal 
 	monotone and belongs to $\mathcal{T}$ was already proven in Proposition~\ref{prop:subdiff F}.
 	
 	Conversely, let $T\in\mathcal{T}$ be a maximal monotone operator. 
 	By Fact~\ref{fact:cyclically monotonicity}, there exists at least one proper, lsc, convex 
 	function with subdifferential equal to $T$. Let $f$ denote any such function (which 
 	already satisfies Definition~\ref{def:F functions}\eqref{def:F functions a}). By Fact~\ref{fact:cont local simplicial}, $f$ is continuous on $\dom f$, that is, Definition~\ref{def:F functions}\eqref{def:F functions b} is satisfied. Finally, to show that $f$ satisfies Definition~\ref{def:F functions}\eqref{def:F functions c}, first recall that a convex function is differentiable at point in its domain if and only if its subdifferential 
 	is a singleton at the same point \cite[Th.~2.2.1]{borweinvanderwerff2010convex}. It follows 
 	that $f$ can be non-differentiable only if $T$ is multi-valued which happens at most at finitely many points. 
 	Consider the restriction of the function $f$ to an open interval on which it is differentiable. Then, 
 	as $T\in\mathcal{T}$, we have that $f'$ is either constant or strictly monotone on this interval. 
 	If $f'$ constant then $f$ is affine. Otherwise  $f'$ is strictly monotone and hence $f$ is strictly convex 
 	\cite[Th.~2.13]{VA}.\qed
 \end{proof}	  
  Note that Theorem~\ref{th:F and T} provides a pathway to symbolically computing a maximal monotone extension of a monotone operators $T\in\mathcal{T}$. First find function $f\in\mathcal{F}$ such that $\partial f=T$. A maximal extension of $T$ is then given by $\partial f$.

  We now return to the question closure of $\mathcal{F}$-function under the operation of Fenchel conjugation. We offer the following proof, which utilizes our class of monotone operators.
  \begin{proposition}[$\mathcal{F}$ is closed under Fenchel conjugation]\label{prop:conj F is F}
	If $f\in\mathcal{F}$ is proper then $f^*\in\mathcal{F}$.
  \end{proposition}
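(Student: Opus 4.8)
The plan is to route the entire argument through the correspondence between $\mathcal{F}$-functions and $\mathcal{T}$-operators established in Theorem~\ref{th:F and T}, rather than manipulating the supremum defining $f^*$ directly. First I would dispose of the degenerate case: if $f\equiv+\infty$ then $f^*\equiv-\infty$, which is not proper and hence not relevant (or, under the convention that $\mathcal{F}$ admits the constant $+\infty$ function, note that $f$ proper is assumed, so this does not arise). So assume $f\in\mathcal{F}$ is proper. Then $f$ is proper, lsc and convex, so $f^*$ is also proper, lsc and convex, giving Definition~\ref{def:F functions}\eqref{def:F functions a} for $f^*$ for free; Definition~\ref{def:F functions}\eqref{def:F functions b} then follows from Fact~\ref{fact:cont local simplicial} applied to $f^*$ on its domain (a convex subset of $\mathbb{R}$, hence an interval, hence locally simplicial).

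The core of the proof is condition~\eqref{def:F functions c}. Here the idea is: by Theorem~\ref{th:F and T}, since $f\in\mathcal{F}$ is proper, $\partial f$ is maximal monotone and lies in $\mathcal{T}$. By Proposition~\ref{prop:T operator properties}\eqref{prop:T operator properties c}, $(\partial f)^{-1}\in\mathcal{T}$, and it is maximal monotone (the inverse of a maximal monotone operator is maximal monotone). By Fact~\ref{fact1} (the Fenchel--Young inverse relation for subdifferentials), $(\partial f)^{-1}=\partial f^*$. Now apply the converse direction of Theorem~\ref{th:F and T} to the maximal monotone operator $T:=(\partial f)^{-1}=\partial f^*\in\mathcal{T}$: there exists a proper, lsc, convex $g$ with $\partial g=T$, and \emph{any} such $g$ belongs to $\mathcal{F}$. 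Since $f^*$ is proper, lsc, convex with $\partial f^*=T$, we conclude $f^*\in\mathcal{F}$.

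The one point that needs genuine care — and which I expect to be the main obstacle — is justifying $(\partial f)^{-1}=\partial f^*$ as an equality of set-valued maps on all of $\mathbb{R}$, including domain boundary points, so that the hypotheses of the converse part of Theorem~\ref{th:F and T} are literally met. Fact~\ref{fact1} as stated gives $\bar v\in\partial f(\bar x)\Rightarrow \bar x\in\partial f^*(\bar v)$ unconditionally, i.e. $\gph(\partial f)^{-1}\subseteq\gph\partial f^*$; the reverse inclusion requires $f$ lsc at the relevant point, which holds everywhere since $f\in\mathcal{F}$ is closed. Thus $\gph(\partial f)^{-1}=\gph\partial f^*$, i.e. the two operators coincide. (Alternatively one can invoke that for proper lsc convex $f$ one has $f^{**}=f$, whence $\partial f^*$ and $\partial f$ are mutually inverse by symmetry of Fact~\ref{fact1}.) Once this identification is secured, the remaining steps are immediate bookkeeping.

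One remark worth including: this argument shows more than claimed — it gives a constructive recipe. To compute $f^*$ symbolically one need not evaluate the conjugate supremum at all; one computes $\partial f\in\mathcal{T}$, inverts it within $\mathcal{T}$ using the procedure in Proposition~\ref{prop:T operator properties}\eqref{prop:T operator properties c}, and then integrates the resulting $\mathcal{T}$-operator back to an $\mathcal{F}$-function via the pathway noted after Theorem~\ref{th:F and T}, fixing the additive constant by the Fenchel--Young equality $f(x)+f^*(v)=\langle v,x\rangle$ at any single point $(x,v)\in\gph\partial f$.
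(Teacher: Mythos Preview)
Your proposal is correct and follows essentially the same route as the paper's own proof: use Theorem~\ref{th:F and T} to get $\partial f\in\mathcal{T}$, apply Proposition~\ref{prop:T operator properties}\eqref{prop:T operator properties c} together with Fact~\ref{fact1} to obtain $\partial f^*=(\partial f)^{-1}\in\mathcal{T}$, and then invoke Theorem~\ref{th:F and T} a second time. Your additional care in justifying the equality $(\partial f)^{-1}=\partial f^*$ and your closing remark on the constructive recipe are welcome elaborations, but the argument is the same.
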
	
  \begin{proof}
  	By Theorem~\ref{th:F and T}, $\partial f\in\mathcal{T}$. Combining Fact~\ref{fact1} with 
Proposition~\ref{prop:T operator properties}\eqref{prop:T operator properties c} shows that 
$\partial f^*=(\partial f)^{-1}\in\mathcal{T}$. Using Theorem~\ref{th:F and T} a second time yields $f^*\in\mathcal{F}$.\qed
  \end{proof}

Assumption~\eqref{def:F functions c} of Definition~\ref{def:F functions} is crucial for obtaining closedness of the family $\mathcal{F}$ under Fenchel conjugation. 
Specifically, the strict convexity assumption on non-constant pieces of the domain cannot be removed and replaced with mere differentiability. 
In fact, the following counter-example shows that this is the case even for infinitely differentiable convex functions.
\begin{example}[Necessity of finitely affine pieces]
Consider the convex function (see Figure~\ref{fig:example})
constructed from (unnormalized)
$C^\infty$ mollifying functions as defined follows:
\begin{equation}\label{eq:example}
f(x):=\int_0^x \int_0^y h(z)\,dz\, dy
\end{equation}
with
\begin{align*}
h(x)&:=\begin{cases}
             \psi\left( 2^{2n+1}x-1 \right)&\mbox{for }x\in[2^{-(2n+1)}, 2^{-2n}]\quad (n\in\mathbb{N}_0)\\
             0 &\mbox{for }x\in(2^{-(2n+2)}, 2^{-(2n+1)}) \quad(n\in\mathbb{N}_0)\\
             0 &\mbox{for }x\in(-\infty, 0]\cup(1,\infty),
            \end{cases}\\          
\intertext{where $\psi$ denotes the mollifying function given by}            
	\psi(x)&:=\begin{cases} 
           \exp\left(-\frac{1}{1-(2x-1)^2}\right)&x\in[0,1]\\
           0& \mbox{else}.
          \end{cases}
\end{align*}

\begin{figure}[htb]
	\centering
	\begin{subfigure}[b]{0.45\textwidth}
		\centering 
		\def\svgwidth{150pt} 
		\begingroup%
		\makeatletter%
		\providecommand\color[2][]{%
			\errmessage{(Inkscape) Color is used for the text in Inkscape, but the package 'color.sty' is not loaded}%
			\renewcommand\color[2][]{}%
		}%
		\providecommand\transparent[1]{%
			\errmessage{(Inkscape) Transparency is used (non-zero) for the text in Inkscape, but the package 'transparent.sty' is not loaded}%
			\renewcommand\transparent[1]{}%
		}%
		\providecommand\rotatebox[2]{#2}%
		\ifx\svgwidth\undefined%
		\setlength{\unitlength}{259.99999237bp}%
		\ifx\svgscale\undefined%
		\relax%
		\else%
		\setlength{\unitlength}{\unitlength * \real{\svgscale}}%
		\fi%
		\else%
		\setlength{\unitlength}{\svgwidth}%
		\fi%
		\global\let\svgwidth\undefined%
		\global\let\svgscale\undefined%
		\makeatother%
		\begin{picture}(1,0.59615388)%
		\put(0,0){\includegraphics[width=\unitlength,page=1]{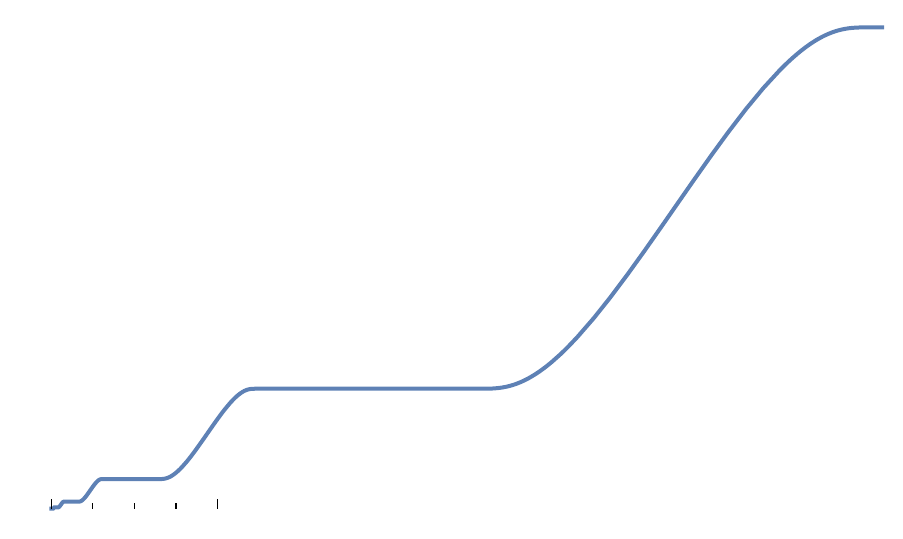}}%
		\put(0.21040609,-0.0171212){\color[rgb]{0,0,0}\makebox(0,0)[lb]{\smash{$0.2$}}}%
		\put(0,0){\includegraphics[width=\unitlength,page=2]{df_plot.pdf}}%
		\put(0.39950724,-0.0171212){\color[rgb]{0,0,0}\makebox(0,0)[lb]{\smash{$0.4$}}}%
		\put(0,0){\includegraphics[width=\unitlength,page=3]{df_plot.pdf}}%
		\put(0.58153694,-0.02015165){\color[rgb]{0,0,0}\makebox(0,0)[lb]{\smash{$0.6$}}}%
		\put(0,0){\includegraphics[width=\unitlength,page=4]{df_plot.pdf}}%
		\put(0.76760724,-0.0171212){\color[rgb]{0,0,0}\makebox(0,0)[lb]{\smash{$0.8$}}}%
		\put(0,0){\includegraphics[width=\unitlength,page=5]{df_plot.pdf}}%
		\put(0.95367755,-0.02116181){\color[rgb]{0,0,0}\makebox(0,0)[lb]{\smash{$1.0$}}}%
		\put(0,0){\includegraphics[width=\unitlength,page=6]{df_plot.pdf}}%
		\put(-0.05151778,0.2032397){\color[rgb]{0,0,0}\makebox(0,0)[lb]{\smash{$0.05$}}}%
		\put(0,0){\includegraphics[width=\unitlength,page=7]{df_plot.pdf}}%
		\put(-0.05353808,0.38355278){\color[rgb]{0,0,0}\makebox(0,0)[lb]{\smash{$0.10$}}}%
		\put(0,0){\includegraphics[width=\unitlength,page=8]{df_plot.pdf}}%
		\put(-0.05252793,0.56487447){\color[rgb]{0,0,0}\makebox(0,0)[lb]{\smash{$0.15$}}}%
		\put(0,0){\includegraphics[width=\unitlength,page=9]{df_plot.pdf}}%
		\end{picture}%
		\endgroup%
		\caption{The monotone operator $f'$ on $[0,1]$.} 
	\end{subfigure}	
	\begin{subfigure}[b]{0.45\textwidth}
		\centering 
		\def\svgwidth{150pt} 
			\begingroup%
			\makeatletter%
			\providecommand\color[2][]{%
				\errmessage{(Inkscape) Color is used for the text in Inkscape, but the package 'color.sty' is not loaded}%
				\renewcommand\color[2][]{}%
			}%
			\providecommand\transparent[1]{%
				\errmessage{(Inkscape) Transparency is used (non-zero) for the text in Inkscape, but the package 'transparent.sty' is not loaded}%
				\renewcommand\transparent[1]{}%
			}%
			\providecommand\rotatebox[2]{#2}%
			\ifx\svgwidth\undefined%
			\setlength{\unitlength}{259.99999237bp}%
			\ifx\svgscale\undefined%
			\relax%
			\else%
			\setlength{\unitlength}{\unitlength * \real{\svgscale}}%
			\fi%
			\else%
			\setlength{\unitlength}{\svgwidth}%
			\fi%
			\global\let\svgwidth\undefined%
			\global\let\svgscale\undefined%
			\makeatother%
			\begin{picture}(1,0.60384616)%
			\put(0,0){\includegraphics[width=\unitlength,page=1]{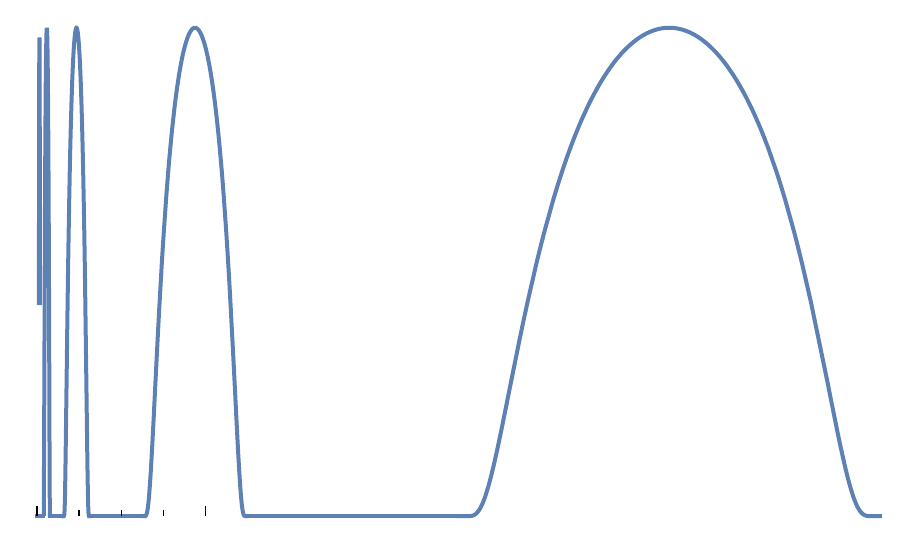}}%
			\put(0.19926148,-0.01167175){\color[rgb]{0,0,0}\makebox(0,0)[lb]{\smash{$0.2$}}}%
			\put(0,0){\includegraphics[width=\unitlength,page=2]{h_plot.pdf}}%
			\put(0.38804602,-0.01167175){\color[rgb]{0,0,0}\makebox(0,0)[lb]{\smash{$0.4$}}}%
			\put(0,0){\includegraphics[width=\unitlength,page=3]{h_plot.pdf}}%
			\put(0.57683175,-0.01167175){\color[rgb]{0,0,0}\makebox(0,0)[lb]{\smash{0$.6$}}}%
			\put(0,0){\includegraphics[width=\unitlength,page=4]{h_plot.pdf}}%
			\put(0.76359709,-0.01167175){\color[rgb]{0,0,0}\makebox(0,0)[lb]{\smash{$0.8$}}}%
			\put(0,0){\includegraphics[width=\unitlength,page=5]{h_plot.pdf}}%
			\put(0.95137263,-0.01167175){\color[rgb]{0,0,0}\makebox(0,0)[lb]{\smash{$1.0$}}}%
			\put(0,0){\includegraphics[width=\unitlength,page=6]{h_plot.pdf}}%
			\put(-0.0383858,0.16838184){\color[rgb]{0,0,0}\makebox(0,0)[lb]{\smash{$0.1$}}}%
			\put(0,0){\includegraphics[width=\unitlength,page=7]{h_plot.pdf}}%
			\put(-0.04141625,0.31646814){\color[rgb]{0,0,0}\makebox(0,0)[lb]{\smash{$0.2$}}}%
			\put(0,0){\includegraphics[width=\unitlength,page=8]{h_plot.pdf}}%
			\put(-0.0404061,0.46455523){\color[rgb]{0,0,0}\makebox(0,0)[lb]{\smash{$0.3$}}}%
			\put(0,0){\includegraphics[width=\unitlength,page=9]{h_plot.pdf}}%
			\put(0.8061017,0.38486812){\color[rgb]{0,0,0}\makebox(0,0)[lb]{\smash{}}}%
			\end{picture}%
			\endgroup%
		\caption{The function $h=f''$ on $[0,1]$.} 
	\end{subfigure}
\caption{Construction of the convex function $f$ in \eqref{eq:example}.}\label{fig:example}
\end{figure}

The function $h$ is nonnegative and infinitely differentiable on $\mathbb{R}\setminus\{0\}$ so that 
$\int_0^y h(z)\,dz$ is continuous nondecreasing. It follows that $f$ is convex on $\mathbb{R}$ and infinitely differentiable on 
$\mathbb{R}\setminus\{0\}$ and hence satisfies properties \eqref{def:F functions a}-\eqref{def:F functions b} of Definition~\ref{def:F functions}.  
The function $f$, however does not satisfy \eqref{def:F functions c} of Definition~\ref{def:F functions} as it is affine on every interval 
$(2^{-(2n+2)}, 2^{-(2n+1)})$ for $n\in\mathbb{N}_0$ fixed with slope $a_n$ given by
\begin{align*}
a_n=  \int_0^{2^{-2(n+1)}} h(z) \,dz &=  \sum_{j=n+1}^\infty\int_{\mathbb{R}}\psi\left(  2^{2j+1}x-1 \right)\,dx\\
&= \sum_{j=n+1}^\infty2^{-(2j+1)}\int_{\mathbb{R}}\psi\left( y\right)\,dy. 
\end{align*}
Now, since $f$ is affine on infinitely many intervals in $[0,1/2]$, its subdifferential, $\partial f$, is constant and singleton on infinitely 
many intervals in $[0,1/2]$ with value on these intervals given by $\{a_n\}_{\mathbb{N}_0}$. It follows that 
$\partial f^*=(\partial f)^{-1}$ is multi-valued at each point in $\{a_n\}_{\mathbb{N}_0}$, of which there are infinitely 
many, and therefore $f^*$ cannot be in $\mathcal{F}$.\qed
\end{example}

\section{Examples and Illustrations}\label{s:examples}
 In this section we detail a number of computational examples and applications which 
utilize the class of monotone operators introduced above. We perform our symbolic 
computations in \emph{Maple} making use of the data-structures provided by the 
\emph{Symbolic Convex Analysis Toolkit (SCAT)} developed by Borwein 
\& Hamilton \cite{borwein2009symbolic}. We shall also make use of an additional function, shown in Figure~\ref{fig:inverse function}, for computing the inverse of a monotone operator. The source code for the examples which follow as well as the SCAT library are available online at:
  \begin{center}
  	\href{http://vaopt.math.uni-goettingen.de/software.php}{http://vaopt.math.uni-goettingen.de/software.php}
  \end{center}

Although we consider the one-dimensional setting, it is worth noting that, as recognized by 
\cite{bauschke1997fenchel}, separable convex functions on $\mathbb{R}^n$ can still be handled. 
Recall that a convex function $f:\mathbb{R}^n\to(-\infty,+\infty]$ is \emph{separable} if there exist 
convex functions $f_j:\mathbb{R}\to(-\infty,+\infty]$ such that $f(x) = \sum_{j=1}^nf_j(x_j).$  For such a function, 
 $$\partial f(x) = \partial f_1(x_1)\times \dots\times \partial f_n(x_n),\qquad f^*(y)=\sum_{j=1}^nf_j^*(y_j).$$
In this way, we may also consider monotone operators $T\colon\mathbb{R}^n\setto\mathbb{R}^n$ such that 
 $$T(x)=T_1(x_1)\times \dots \times T_n(x_n)$$
where each $T_j\colon\mathbb{R}\setto\mathbb{R}$ is a monotone operator. We give examples 
in which this kind of structure arising in Sections~\ref{sec:prox from f} and \ref{sec:recovery}.

\begin{figure}[h]
\begin{framed}\ttfamily
	{\color{mapleCode}
\begin{verbatim}
with(SCAT): # load the SCAT package 
# Compute the inverse of a SD-type object (i.e. a cyclic monotone operator) 
# using Fenchel conjugation
Invert := proc(sdf::SD,y)  
   local sdg, f, conjf; 
   f := Integ(sdf); # antiderivative of sdf
   g := Conj(f,y);  # Fenchel conjugate of f
   sdg := SubDiff(g,y); # subderivative of g
   return sdg; 
end proc:
\end{verbatim}\vspace{-1em}}
\end{framed}\vspace{-1em}
\caption{Inversion of a maximal cyclic monotone operator using Fenchel conjugation.}\label{fig:inverse function}
\end{figure}


\subsection{Explicit Formula for Proximity Operators}\label{sec:prox from f}
Recall that the \emph{proximity operator} of a proper, lsc, convex function 
$f:\mathbb{R}^n\to(-\infty,+\infty]$ with parameter $\lambda>0$ is given by 
 \begin{equation*}
 \prox_{f}^\lambda:=\argmin_{y\in\mathbb{R}}\left\{f(y)+\frac{1}{2\lambda}\|\cdot-y\|^2\right\}.
 \end{equation*}

Proximity operators are the building blocks of many iterative algorithms in optimization 
and thus it is important, in practice, that they can be efficiently computed.
One such possibility is to find an \emph{explicit formula} for the proximity operator. 

We compute explicit forms for some classical proximity operators using the 
framework of $\mathcal{T}$-operators. Our approach exploits the formula
  \begin{equation}\label{eq:prox as resolvent}
	  \prox_{f}^\lambda = (I+\lambda \partial f)^{-1}.
	\end{equation}
Given a function $f\in\mathcal{F}$, we symbolically compute its subdifferential using the SCAT 
package. Using \eqref{eq:prox as resolvent} and Proposition~\ref{prop:T operator properties}, we deduce 
that $\prox_{f}^\lambda\in\mathcal{T}$.

Two example computations of proximity operators are in given in Figures~\ref{fig:l1 prox} and \ref{fig:interval proj}. Furthermore, it is worth nothing that, thanks to our theory in Section~\ref{sec:operators in 1D}, this is computation actually proves the resulting formula for the proximity function.

\begin{figure}[tbh]
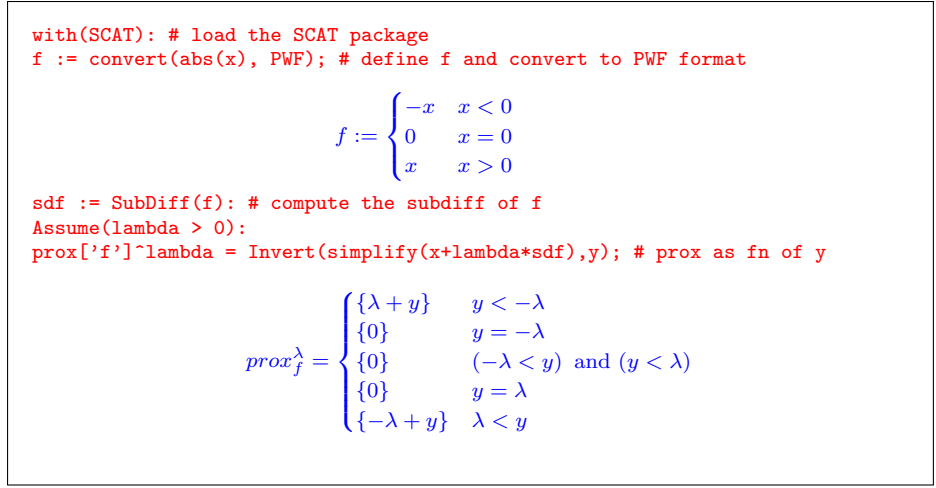

\begin{framed}\ttfamily\noindent
	{\color{mapleCode}
		with(SCAT): \# load the SCAT package \\
		f := convert(abs(x), PWF); \# define f and convert to PWF format \\
		$${\color{mapleMath}
				  f:=
				   \begin{cases}
					-x & x<0 \\
					0     & x=0 \\
					x & x>0 \\
					\end{cases}\qquad\qquad
		}$$   
		sdf := SubDiff(f): \# compute the subdiff of f \\
		Assume(lambda > 0): \\
		prox['f']\textasciicircum lambda = Invert(simplify(x+lambda*sdf),y); \# prox as fn of y
		$${\color{mapleMath}
		  prox_f^{\lambda} =
		   \begin{cases}
			\{\lambda+y\} & y<-\lambda \\
			\{0\}     & y=-\lambda \\
			\{0\}     & (-\lambda<y)\text{ \normalfont and }(y<\lambda) \\
			\{0\}     & y=\lambda \\
			\{-\lambda+y\} & \lambda<y \\
			\end{cases}
		}$$    
	}
\end{framed}\vspace{-1em}
\caption{Computation of the proximity function of $f=\|\cdot\|_1$.}\label{fig:l1 prox}
\end{figure}

\begin{figure}[tbh]
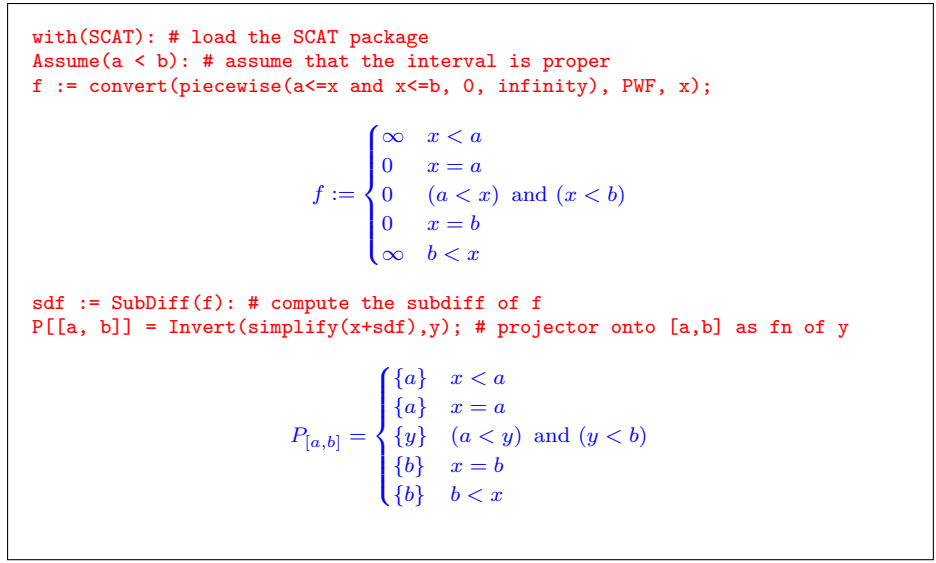

\begin{framed}\ttfamily\noindent
	{\color{mapleCode}
	with(SCAT): \# load the SCAT package \\
	Assume(a < b): \# assume that the interval is proper \\
	f := convert(piecewise(a<=x and x<=b, 0, infinity), PWF, x); 
	 $${\color{mapleMath} f := \begin{cases}
	           \infty & x<a \\
	           0 & x=a \\
	           0 & (a<x)\text{ \normalfont and }(x<b) \\
	           0 & x=b \\
	           \infty & b<x \\
             \end{cases}}$$
	\noindent sdf := SubDiff(f): \# compute the subdiff of f\\
	P[[a, b]] = Invert(simplify(x+sdf),y); \# projector onto [a,b] as fn of y
	{\color{mapleMath}
		 $$  P_{[a,b]} = \begin{cases}
		           \{a\} & x<a \\
		           \{a\} & x=a \\
		           \{y\} & (a<y)\text{ \normalfont and }(y<b) \\
		           \{b\} & x=b \\
		           \{b\} & b<x \\
	             \end{cases}$$
		}	
	}
\end{framed}
\vspace{-1em}
\caption{Computation of the proximity function of $f=\iota_{[a,b]}$ ({\em i.e.,} projector onto $[a,b]$).}\label{fig:interval proj}
\end{figure}

\subsection{Recovery of Penalty Functions}\label{sec:recovery}
 Given a monotone operator $T:\mathbb{R}^n\setto\mathbb{R}^n$, we consider the problem 
of finding a so-called \emph{penalty function} $f:\mathbb{R}^n\to(-\infty,+\infty]$, that is, a 
function $f$ whose subdifferential can be identified with $T$. Precisely, find a function $f$ such that
   $$\gph T\subseteq\gph(\prox_f^\lambda).$$
 We shall focus on the case in which $\lambda=1$ as it covers all the technicalities of the general 
case and thus we define $\prox_f:=\prox_f^1$. The same problem was previously studied 
in \cite{bayram2015penalty}, without symbolic computational tools.
 
The following proposition and its proof shall form the basis of our approach.
\begin{proposition}[Recovery of penalty functions]\label{prop:penalty from prox}
	Let $T:\mathbb{R}^n\setto\mathbb{R}^n$ be a maximal cyclically monotone operator. 
There exists a proper, lsc function $f$ with $f+\frac{1}{2}\|\cdot\|^2$ convex such that $T=\prox_f$. 
Furthermore, if $T\in\mathcal{F}$ then there exists an $f$ such that 
$f+\frac{1}{2}\|\cdot\|^2$ belongs to $\mathcal{F}$.
\end{proposition}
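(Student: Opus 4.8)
The plan is to reduce the statement to the Fenchel-conjugacy calculus already developed. The key observation is that for any $f$ for which $g:=f+\tfrac12\|\cdot\|^2$ is proper, lsc and convex, completing the square in the definition of the proximity operator gives, for every $x$,
$$\prox_f(x)=\argmin_{y}\bigl\{g(y)-\langle x,y\rangle\bigr\}=\partial g^*(x),$$
where the last equality combines the optimality condition for the convex function $g(\cdot)-\langle x,\cdot\rangle$ with the inverse relationship between $\partial g$ and $\partial g^*$ (Fact~\ref{fact1}): $y$ minimizes $g(\cdot)-\langle x,\cdot\rangle$ iff $x\in\partial g(y)$ iff $y\in\partial g^*(x)$. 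So constructing an $f$ with $T=\prox_f$ amounts to exhibiting a proper, lsc, convex $g$ with $\partial g^*=T$ and then setting $f:=g-\tfrac12\|\cdot\|^2$.

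First I would prove the general assertion. Since $T$ is maximal cyclically monotone, Fact~\ref{fact:cyclically monotonicity} furnishes a proper, lsc, convex $h$ with $T=\partial h$. Put $g:=h^*$, which is again proper, lsc and convex, and observe $g^*=h^{**}=h$, so $\partial g^*=\partial h=T$. Hence $f:=h^*-\tfrac12\|\cdot\|^2$ satisfies $\prox_f=T$. Admissibility of $f$ is then routine: $f+\tfrac12\|\cdot\|^2=h^*$ is convex; $f$ is the sum of the lsc function $h^*$ and the continuous function $-\tfrac12\|\cdot\|^2$, hence lsc; and $\dom f=\dom h^*\neq\emptyset$ with $f>-\infty$ throughout, so $f$ is proper.

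For the refinement, suppose in addition that $T\in\mathcal{T}$. On the real line maximal monotone and maximal cyclically monotone coincide, so Theorem~\ref{th:F and T} provides a proper $h\in\mathcal{F}$ with $T=\partial h$. Proposition~\ref{prop:conj F is F} then gives $g:=h^*\in\mathcal{F}$, and the construction above yields $f:=g-\tfrac12\|\cdot\|^2$ with $\prox_f=T$ and $f+\tfrac12\|\cdot\|^2=g\in\mathcal{F}$, as required.

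The one place needing genuine care is the displayed identity: because $f$ itself is not assumed convex, one must check $\prox_f(x)=\partial g^*(x)$ at \emph{every} $x$, including the points where the set is empty. When $x\notin\dom g^*$ the inner objective $g(\cdot)-\langle x,\cdot\rangle$ is unbounded below, so the $\argmin$ is empty and $\partial g^*(x)=\emptyset$ as well, and the graphs still agree; when $x\in\dom g^*$ but $g^*$ fails to be subdifferentiable at $x$, both sides are again empty. Everything else is bookkeeping on top of Facts~\ref{fact1} and \ref{fact:cyclically monotonicity}, Theorem~\ref{th:F and T} and Proposition~\ref{prop:conj F is F}.
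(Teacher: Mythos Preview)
Your proof is correct and follows essentially the same route as the paper: both construct $f:=h^*-\tfrac12\|\cdot\|^2$ from a proper, lsc, convex $h$ with $\partial h=T$ (supplied by Fact~\ref{fact:cyclically monotonicity}), verify via completing the square and the subdifferential inversion $\partial h=(\partial h^*)^{-1}$ that $\prox_f=T$, and then invoke Theorem~\ref{th:F and T} and closure of $\mathcal{F}$ under conjugation for the refinement. Your additional remarks on the empty cases are a welcome bit of care not made explicit in the paper.
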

\begin{proof}
	By Fact~\ref{fact:cyclically monotonicity}, there exists a proper, lsc, convex function 
$h:\mathbb{R}^n\to(-\infty,+\infty]$ such that $T=\partial h$, and in particular, if 
$T\in\mathcal{T}$ then Theorem~\ref{th:F and T} ensures that we may choose $h\in\mathcal{F}$. 
Using Fermat's rule \cite[Th.~10.1]{VA}, we deduce that
	\begin{align*}
	T(x) = \partial h(x)
	= \left(\partial h^*\right)^{-1}(x) 
	&= \{y\in\mathbb{R}^n:x\in\partial h^*(y)\} \\
	&= \argmin_{y\in\mathbb{R}^n}\{h^*(y)-\langle x,y\rangle\} \\
	&= \argmin_{y\in\mathbb{R}^n}\left\{\left(h^*(y)-\frac{1}{2}\|y\|^2\right)+\frac{1}{2}\|x-y\|^2\right\}.
	\end{align*}
	We therefore have $T=\prox_f$ where $f:=h^*-\frac{1}{2}\|\cdot\|^2$. The fact 
that $f$ is proper and lsc with $f+\frac{1}{2}\|\cdot\|^2$ convex follows since $h^*$ is proper, lsc and 
convex. In particular, if $h\in\mathcal{F}$ then $f+\frac{1}{2}\|\cdot\|^2=h^*\in\mathcal{F}$. \qed
\end{proof}

We are now ready to state our strategy for reconstruction of the penalty function associated with 
the monotone operator $T\in\mathcal{T}$. 
    \begin{enumerate}[(i)]
    \itemsep0em 
    \item\label{item:i} Find a maximal cyclically monotone extension of $\widetilde{T}\in\mathcal{T}$ of $T$.
    \item\label{item:ii} Find a function $h\in\mathcal{F}$ such that $\partial h=\widetilde{T}$.
    \item\label{item:iii} Compute the Fenchel conjugate $h^*$ of $h$.
    \item\label{item:iv} The penalty function $f$ can now be given as $f:=h^*-\frac{1}{2}\|\cdot\|^2$.
    \end{enumerate}
  We note that the existence of a function $h$ in Step~\eqref{item:ii} is possible due to
 Theorem~\ref{th:F and T} and can be obtained via integrating any selection of $T$ 
\cite[Prop.~1.6.1]{niculescu2006convex}. Step~\eqref{item:iii} can be performed for 
$\mathcal{F}$-functions within the SCAT package and Step~\eqref{item:iv} is clearly 
straightforward. Thus the only potentially difficult computation arises in Step~\eqref{item:i}; but 
this can sometimes be dealt with satisfactorily as we shall now demonstrate.

\begin{example}[Hidden convexity of the hard thresholding operator]
 The \emph{hard thresholding operator} $H_\alpha:\mathbb{R}\to\mathbb{R}$ for 
parameter $\alpha>0$ is defined by
  $$ H_\alpha(x) := \begin{cases}
                     x &  \text{if }|x|>\alpha, \\
                     0 & \text{otherwise.} \\
                     \end{cases} $$
 In compressive sensing, $H_\alpha$ is usually viewed as a selection of the (set-valued) 
proximity operator of the $\ell_0$-functional; a non-convex object. More precisely, 
   $$\prox_{\frac{\alpha}{2}\|\cdot\|_0}(x) =\argmin_{y\in\mathbb{R}}
\left\{\alpha\|y\|_0 +|x-y|^2\right\}=\begin{cases}
                      x & \text{if }|x|>\alpha,\\
                      \{0,x\} & \text{if }|x|=\alpha,\\
                      0    & \text{otherwise};
                    \end{cases}$$             
 and hence that $\gph H_\alpha\subseteq\gph\prox_{\frac{\alpha}{2}\|\cdot\|_0}$. 
Whilst both $H_\alpha$ and $\prox_{\frac{\alpha}{2}\|\cdot\|_0}$ are monotone operators, 
neither are maximal. Nevertheless, on account of having full domain, their unique maximal 
monotone extension can be given
   $$ T(x) := \begin{cases}
                x     & \text{if }|x|>\alpha,\\
                [0,x] & \text{if }|x|=\alpha,\\
                0     & \text{otherwise};
              \end{cases} $$
 and, moreover, it is easy to verify that $T$ belongs to $\mathcal{T}$. 
 
We are now in a position to recover the penalty function $f$ associated with $T$.
\begin{figure}[h]
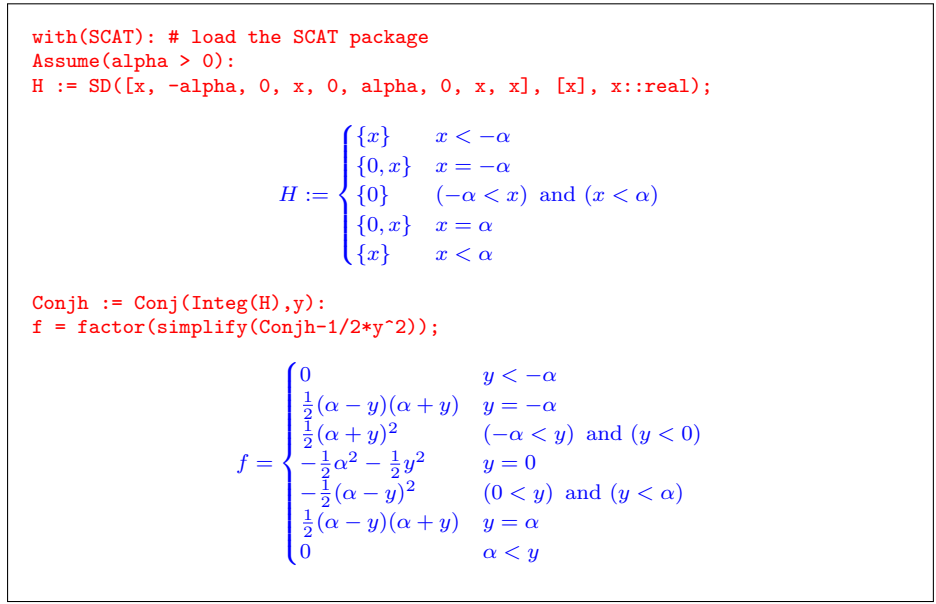

 \begin{framed}\ttfamily\noindent
 {\color{red}
 with(SCAT): \# load the SCAT package \\
 Assume(alpha > 0): \\
 H := SD([{x}, -alpha, {0, x}, {0}, alpha, {0, x}, {x}], [x], {x::real});
 }
 $${\color{blue} H:=\begin{cases}
         \{x\}   & x<-\alpha \\
         \{0,x\} & x=-\alpha \\
         \{0\}   & (-\alpha<x)\text{ \normalfont and }(x<\alpha) \\
         \{0,x\} & x=\alpha \\
         \{x\}   & x<\alpha \\
       \end{cases} }$$
 {\color{red}\noindent
 Conjh := Conj(Integ(H),y): \\
 f = factor(simplify(Conjh-1/2*y\textasciicircum 2));
 }
 
 $${\color{blue} 
    f=\begin{cases}
        0 & y<-\alpha \\
        \frac{1}{2}(\alpha-y)(\alpha+y) & y=-\alpha \\
        \frac{1}{2}(\alpha+y)^2 & (-\alpha<y)\text{ \normalfont and }(y<0) \\
        -\frac{1}{2}\alpha^2-\frac{1}{2}y^2 & y=0 \\
        -\frac{1}{2}(\alpha-y)^2 & (0<y)\text{ \normalfont and }(y<\alpha) \\
        \frac{1}{2}(\alpha-y)(\alpha+y) & y=\alpha \\
        0 & \alpha<y \\
       \end{cases} }$$
 \end{framed}
 \vspace{-1em}
 \caption{Recovery of a convex penalty associated with $H_\alpha$.}\label{fig:hard thresh}
 \end{figure}
 
 A closer look at the result from Figure~\ref{fig:hard thresh} shows that the penalty function $f$ 
may be expressed more concisely in the form
 \begin{equation}\label{eq:H penalty}
  f(y) = \begin{cases}
 0 & |y|>\alpha, \\
 -\frac{1}{2}\left(|y|-\alpha\right)^2  & |y|\leq\alpha.\\
 \end{cases}
 \end{equation}
It is also worth noting, that the entire procedure can be reserved to give a proof that the hard 
thresholding operator is monotone! More precisely, one should symbolically compute the proximity 
function of $f$ in \eqref{eq:H penalty} using the method in Section~\ref{sec:prox from f}. If the result is
 an extension of the original operator, $H_\alpha$, then it is necessarily monotone (as the 
subdifferential of a proper, lsc, convex function).
\end{example}

\subsection{Superexpectations, superdistributions and superquantiles} 
The class of functions $\mathcal T$ is well-suited for direct symbolic calculation of 
superexpectations, superdistributions and superquantiles as developed in \cite{OgrRus02,DenMar12,RocRoy14}.  
The {\em superexpectation} function, $\overline{\mathbf{E}}_X(x)$, 
of the random variable $X$ at level $x$ is defined as 
\begin{equation}\label{e:superexp}
\begin{split}
\mathbf{E}_X(x):= \mathbb{E}\ecklam{\max\{x, X\}} &= \int_{-\infty}^\infty\max\{x, x'\}\ dF_X(x')\\
&= \int_0^1\max\{x, Q_X(p)\}\ dp.
\end{split}
\end{equation}
where 
$\map{F_X}{\mathbb{R}}{[0,1]}$ is the {\em cumulative distribution function} of the random variable $X$
and  $\map{Q_X}{(0,1)}{(-\infty,+\infty)}$ is the {\em quantile function};  these are defined
respectively as 
\begin{align}\label{e:F_X}
 F_X(x)&:=\prob\klam{X\leq x}\\
\label{e:Q_X}
 Q_X(p)&:=\min\set{x}{F_X(x)\geq p}\qquad (p\in(0,1)).
\end{align}
The function $F_X$ is nondecreasing and right-continuous on $(-\infty, +\infty)$ with 
$$\lim_{x\to-\infty}F_X(x)=0,\quad \lim_{x\to+\infty}F_X(x)=1.$$
The maximal monotone extension of the distribution function $F_X$, denoted $\widetilde{\mathcal{F}}$ 
is called the {\em superdistribution} and 
is also generated by taking the subdifferential of the superexpectation function \cite[Th.~1]{RocRoy14}, 
which is a  finite convex function on $(-\infty, \infty)$ with 
 \begin{equation}\label{e:superexp-superdist1}
\gph\widetilde{\mathcal{F}}_X=\gph\partial \mathbf{E}_X, \quad F_X(x)=\mathbf{E}_X'^+(x),
 \end{equation}
 where
 \[
\mathbf{E}_X'^+(x):= \lim_{y\downarrow x}\frac{\mathbf{E}_X(y)-\mathbf{E}_X(x)}{y-x}.   
 \]
The superquantile function, denoted  $\widetilde{\mathcal{Q}}_X$  
is the maximal monotone extension of the quantile function and satisfies 
the inverse relationship \cite[Th.~2]{RocRoy14}
\[
   \paren{\gph\widetilde{\mathcal{F}}_X}^{-1} =\gph \partial \mathbf{E}^*_X = \gph\widetilde{\mathcal{Q}}_X,
\mbox{ and } Q_X(p)= \mathbf{E}^{*'-}_X(p).
\]
These objects are therefore amenable to symbolic convex analysis, via
subdifferentials of the superexpectation function $\mathbf{E}_X$ or its Fenchel conjugate. 
This provides a symbolic route to working with {\em coherent risk measures} such as 
{\em conditional valued-at-risk}. 

To demonstrate this approach, we symbolically derive an example which appears in \cite{RocRoy14}.
\begin{example}[Exponential distributions]
Let $X$ be exponentially distributed with parameter $\lambda>0$. That is, $X$ has cumulative 
distribution function $F_X=1-\exp(-\lambda x)$. Figure~\ref{fig:rockafellar} 
shows the symbolic computation of the superexpectations function, the superdistribution function, and the superquantile function of $X$. Note that, to compute the superexpection of $F_X$, we have made use of the fact that 
   $$ \lim_{x\to\infty}[E_X(x)-x]=0,$$
 which was proven as part of \cite[Th.~1]{RocRoy14}. In this way, it is not necessary to compute the potentially tricky `max' in the definition \eqref{e:superexp} directly.

\begin{figure}[h]
 \begin{framed}\ttfamily\noindent
 {\color{mapleCode}
  with(SCAT):\\
  F := 1-exp(-lambda*x): \# distribution fn of X \\
  Q := solve(F=p,x); \# quantile fn of X \\
  $${\color{mapleMath} 
	  Q := - \frac{\ln(1-p)}{\lambda}  
  }$$
  superQ := factor(1/(1-p)*int(subs(p=t,Q),t=p..1)); \# superquantile fn of X \\
  $${\color{mapleMath} 
  	  superQ := - \frac{\ln(1-p)-1}{\lambda}
  }$$
  Assume(lambda>0): \\
  F := convert(piecewise(x >= 0,F), SD,x);
  $${\color{mapleMath} 
  	  E := \begin{cases}
  	          \{0\} & x<0 \\
  	          \{0\} & x=0 \\
  	          \{1-e^{-\lambda x}\} & 0 < x \\
   	  \end{cases}
  }$$  
  \# compute the superexpection function of F \\
  E0 := Integ(F,x): \\
  c0 := Eval(simplify(E0-x), x = infinity): \\
  E  := simplify(E0 - c0); \\
  $${\color{mapleMath} 
  	  E := \begin{cases}
  	          \displaystyle\frac{1}{\lambda} & x<0 \\[1.25ex]
  	          \displaystyle\frac{1}{\lambda} & x=0 \\[1.25ex]
  	          \displaystyle\frac{\lambda x+e^{-\lambda x}}{\lambda} & 0<x \\[1.25ex]
   	       \end{cases}
  }$$    
  conjE := conjE(E,p,x); \# the conjugate of the superexpectation
  $${\color{mapleMath} 
  	  conjE := \begin{cases}
  	          \infty & p < 0 \\[1.15ex]
  	          -\displaystyle\frac{1}{\lambda} & p=0 \\[1.25ex]
  	          -\displaystyle\frac{(-1+p)(\ln(1-p)-1)}{\lambda} & (0<p)\text{ \normalfont and }(p<1) \\[1.25ex]
  	          0 & p=1 \\
  	          \infty & 1<p \\
   	       \end{cases}
  }$$    
 }
 \end{framed}
 \vspace{-1em}
 \caption{Computation of super-functions for the exponential distribution.}\label{fig:rockafellar}
 \end{figure}
\end{example}

\begin{acknowledgements}
DRL was supported in part by Deutsche Forschungsgemeinschaft Collaborative Research Center SFB755. MKT was supported by Deutsche Forschungsgemeinschaft RTG2088. 
\end{acknowledgements}


\end{document}